\newtheorem{theorem}{Theorem}[section]
\newtheorem{lemma}[theorem]{Lemma}
\newtheorem{corollary}[theorem]{Corollary}
\theoremstyle{definition}
\newtheorem{definition}[theorem]{Definition}
\theoremstyle{remark}
\numberwithin{equation}{section}
\newcommand{\C}{\mathbb{C}}
\newcommand{\N}{\mathbb{N}}
\newcommand{\R}{\mathbb{R}}
\newcommand{\cA}{\mathcal{A}}
\newcommand{\cB}{\mathcal{B}}
\newcommand{\cC}{\mathcal{C}}
\newcommand{\cF}{\mathcal{F}}
\newcommand{\cK}{\mathcal{K}}
\newcommand{\cL}{\mathcal{L}}
\newcommand{\cM}{\mathcal{M}}
\newcommand{\eps}{\varepsilon}
\begin{document}
\title[Maximally Modulated Singular Integral Operators]
{Maximally Modulated Singular Integral Operators\\
and their Applications to Pseudodifferential Operators\\
on Banach Function Spaces}

\author{Alexei Yu. Karlovich}
\address{
Centro de Matem\'atica e Aplica\c{c}\~oes (CMA) and
Departamento de Matem\'atica,
Faculdade de Ci\^encias e Tecnologia,
Universidade Nova de Lisboa,
Quinta da Torre,
2829--516 Caparica,
Portugal}
\email{oyk@fct.unl.pt}

\thanks{This work was partially supported by the Funda\c{c}\~ao para a Ci\^encia e a Tecnologia
(Portuguese Foundation for Science and Technology) through the projects
PEst-OE/MAT/UI0297/2014 (Centro de Matem\'atica e Aplica\c{c}\~oes)
and
EXPL/MAT-CAL/0840/2013 (Problemas Variacionais em Espa\c{c}os de Sobolev de Expoente Vari\'avel).}

\subjclass[2010]{Primary 42B20, 47G30; Secondary 46E30, 42B25}
\date{July 1, 2014} 


\keywords{%
Maximally modulated singular integral operator,
Calder\'on-Zygmund operator,
Hilbert transform,
pseudodifferential operator with non-regular symbol,
Banach function space,
variable Lebesgue space.}

\begin{abstract}
We prove that if the Hardy-Littlewood maximal operator is boun\-ded on a
separable Banach function space $X(\mathbb{R}^n)$ and on its associate
space $X'(\mathbb{R}^n)$ and a maximally modulated Calder\'on-Zygmund
singular integral operator $T^\Phi$ is of weak type $(r,r)$ for all
$r\in(1,\infty)$, then $T^\Phi$ extends to a bounded operator on $X(\mathbb{R}^n)$.
This theorem implies the boundedness of the maximally modulated Hilbert
transform on variable Lebesgue spaces $L^{p(\cdot)}(\mathbb{R})$ under natural
assumptions on the variable exponent $p:\mathbb{R}\to(1,\infty)$.
Applications of the above result to the boundedness and compactness
of pseudodifferential operators with $L^\infty(\mathbb{R},V(\mathbb{R}))$-symbols
on variable Lebesgue spaces $L^{p(\cdot)}(\mathbb{R})$ are considered.
Here the Banach algebra $L^\infty(\mathbb{R},V(\mathbb{R}))$ consists of all
bounded measurable $V(\mathbb{R})$-valued functions on $\mathbb{R}$ where
$V(\mathbb{R})$ is the Banach algebra of all functions of bounded total variation.
\end{abstract}

\maketitle
\section{Introduction}
In this paper we will be concerned with the boundedness of maximally modulated
Calder\'on-Zygmund singular integral operators and its applications to the boundedness
of pseudodifferential operators with non-regular symbols on separable Banach function
spaces.

Let us define the main operators we are dealing with.
Let $L_0^\infty(\R^n)$ and $C_0^\infty(\R^n)$ denote the sets of all bounded
functions with compact support and all infinitely differentiable functions
with compact support, respectively.
A Calder\'on-Zygmund operator is a linear operator $T$ which is bounded
on $L^2(\R^n)$ such that for every $f\in L_0^\infty(\R^n)$,
\[
(Tf)(x):=\int_{\R^n}K(x,y)f(y)\,dy
\quad\mbox{for a.e.}\quad
x\in\R^n\setminus\operatorname{supp}f,
\]
where $\operatorname{supp}f$ denotes the support of $f$. The kernel
\[
K:\R^n\times\R^n\setminus\{(x,x):x\in\R^n\}\to\C
\]
is assumed to satisfy the following standard conditions:
\[
|K(x,y)|\le\frac{c_0}{|x-y|^n}
\quad\mbox{for}\quad x\ne y
\]
and
\[
|K(x,y)-K(x,y')|+|K(y,x)-K(y',x)|
\le \frac{c_0|y-y'|^\tau}{|x-y|^{n+\tau}}
\]
for $|x-y|>2|y-y'|$, where $c_0$ and $\tau$ are some positive constants independent of $x,y,y'\in\R^n$
(see, e.g., \cite[Section~8.1.1]{G09}). The most prominent example of Calder\'on-Zygmund
operators is the Hilbert transform defined for $f\in L_0^\infty(\R)$ by
\[
(Hf)(x):=\frac{1}{\pi}\mbox{v.p.}\int_\R\frac{f(y)}{x-y}\,dy
=
\lim_{\eps\to 0}\frac{1}{\pi}\int_{\R\setminus I(x,\eps)}\frac{f(y)}{x-y}\,dy,
\quad x\in\R,
\]
where $I(x,\eps):=(x-\eps,x+\eps)$.

Suppose $\Phi=\{\phi_\alpha\}_{\alpha\in\cA}$ is a family of measurable real-valued
functions indexed by an arbitrary set $\cA$. Then for every $\phi_\alpha\in\cA$,
the modulation operator is defined  by
\[
(\cM^{\phi_\alpha}f)(x):=e^{-i\phi_\alpha(x)}f(x),
\quad
x\in\R^n.
\]
Following \cite{GMS05} (see also \cite{DPL13}), the maximally modulated singular integral
operator $T^\Phi$ of the Calder\'on-Zygmund operator $T$ with respect to the family
$\Phi$ is defined for $f\in L_0^\infty(\R^n)$ by
\[
(T^\Phi f)(x):=\sup_{\alpha\in\cA}|T(\cM^{\phi_\alpha}f)(x)|,
\quad
x\in\R^n.
\]
This definition is motivated by the fact that the maximally modulated Hilbert transform
\[
(\cC f)(x):=(H^\Psi f)(x)
\quad\mbox{with}\quad\Psi:=\{\psi_\alpha(x)=\alpha x:\alpha, x\in\R\}
\]
is closely related to the continuous version of the celebrated Carleson-Hunt theorem on the
a.e. convergence of Fourier series (see, e.g.,
\cite[Chap.~2, Section 2.2]{D91}, \cite[Chap.~11]{G09}, and \cite[Chap.~7]{MS13}).
In \cite{GMS05,DPL13} the operator $\cC$ is called the Carleson operator, however
in \cite[Section 11.1]{G09} and in \cite[Section~7.1]{MS13} this term is used for
two different from $\cC$ and each other operators.

For $f\in C_0^\infty(\R)$, consider the maximal singular integral operator given by
\begin{equation}\label{eq:S*-definition}
(S_*f)(x):=
\sup_{-\infty <a<b<+\infty}\left| (S_{(a,b)}f)(x)\right|,
\quad x\in\R,
\end{equation}
where $S_{(a,b)}f$ is the integral analogue of the partial sum of the Fourier series given by
\[
(S_{(a,b)}f)(x):=
\frac{1}{2\pi}
\int_a^b\widehat{f}(\lambda)e^{ix\lambda}\,d\lambda,
\quad x\in\R,
\]
and
\[
\widehat{f}(\lambda):=(\cF f)(\lambda):=\int_\R f(x)e^{-ix\lambda}dx,
\quad\lambda\in\R,
\]
is the Fourier transform of $f$. It is not difficult to see that if $f\in C_0^\infty(\R)$,
then
\begin{equation}\label{eq:S*-C-pointwise}
(S_*f)(x)\le (\cC f)(x)
\quad\mbox{for a.e.}\quad x\in\R.
\end{equation}

For a suitable function $a$ on $\R\times\R$, a pseudodiffferential operator $a(x,D)$
is defined for a function $f\in C_0^\infty(\R)$ by the iterated integral
\begin{equation}\label{eq:PDO}
(a(x,D)f)(x):=\frac{1}{2\pi}\int_\R\,d\lambda\int_\R a(x,\lambda)e^{i(x-y)\lambda}f(y)\,dy,
\quad
x\in\R.
\end{equation}
The function $a$ is called the symbol of the pseudodifferential operator $a(x,D)$.

Our results on the above mentioned operators will be formulated in terms of the
Hardy-Littlewood maximal function, which we define next. Let $1\le r<\infty$.
Given $f\in L_{\rm loc}^r(\R^n)$, the $r$-th maximal operator is defined by
\[
(M_rf)(x):=\sup_{Q\ni x}\left(\frac{1}{|Q|}\int_Q|f(y)|^r dy\right)^{1/r},\quad x\in\R^n,
\]
where the supremum is taken over all cubes $Q$ containing $x$. Here, and throughout,
all cubes will be assumed to have their sides parallel to the coordinate axes and
$|Q|$ will denote the volume of $Q$. For $r=1$ this is the usual Hardy-Littlewood
maximal operator, which will be denoted by $M$.

Let $f\in L^1_{\rm loc}(\mathbb{R}^n)$.  For a cube
$Q\subset\mathbb{R}^n$, put
\[
f_Q:=\frac{1}{|Q|}\int_Q f(x)dx.
\]
The Fefferman-Stein sharp maximal operator $f\mapsto M^\#f$ is defined by
\[
(M^\#f)(x):=\sup_{Q\ni x}\frac{1}{|Q|}\int_Q|f(x)-f_Q|dx,
\quad x\in\R^n,
\]
where the supremum is taken over all cubes $Q$ containing $x$.

Banach function spaces $X(\R^n)$ will be defined in Section~\ref{sec:preliminaries}.
This is a wide class of spaces including rearrangement-invariant (r.i.) 
Lebesgue, Orlicz, and Lorentz spaces, as well as non-r.i. variable Lebesgue
spaces $L^{p(\cdot)}(\R^n)$. The main feature of these spaces is the so-called
lattice property: if $|f(x)|\le|g(x)|$ for a.e. $x\in\R^n$, then $\|f\|_{X(\R^n)}\le\|g\|_{X(\R^n)}$.
In Section~\ref{sec:preliminaries} we collect preliminaries prepare the proof
of main results given in Section~\ref{sec:main}. Let us briefly describe them.

The boundedness of maximally modulated Calder\'on-Zygmund operators $T^\Phi$
on weighted Lebesgue spaces was studied by Grafakos, Martell, and Soria \cite{GMS05}.
A quantitative version of their results was obtained recently by Di Plinio and Lerner
\cite{DPL13}. We show that a pointwise inequality for the sharp maximal function
of $T^\Phi$ obtained in \cite[Proposition~4.1]{GMS05}, combined with the Fefferman-Stein
inequality for Banach function spaces due to Lerner \cite[Corollary~4.2]{L10},
and with the self-improving property of the Hardy-Littlewood maximal function
on Banach function spaces obtained by Lerner and P\'erez \cite[Corolary~1.3]{LP07},
imply the boundedness of $T^\Phi$ on a separable Banach function space $X(\R^n)$
under the natural assumptions that $M$ is bounded on $X(\R^n)$, $M$ is bounded on
its associate space $X'(\R^n)$, and $T^\Phi$ is of weak type $(r,r)$ for all 
$r\in(1,\infty)$ (see Theorem~\ref{th:main}). Notice that the latter hypothesis 
is satisfied for the maximally modulated Hilbert transform $\cC$. This gives the 
boundedness of $\cC$ on $X(\R)$ (see Corollary~\ref{co:boundedness-C-X}). From here
and the pointwise estimate \eqref{eq:S*-C-pointwise} we also get the boundedness
of the operator $S_*$ on separable Banach functions spaces such that $M$ is bounded
on $X(\R)$ and on $X'(\R)$ (see Lemma~\ref{le:boundedness-S*-X}).

Section~\ref{sec:application} is devoted to applications of the above results
to the boundedness of pseudodifferential operators with non-regular symbols
on Banach function spaces. Note that the boundedness of $a(x,D)$ with smooth
(regular) symbols in H\"ormander's and Miyachi's classes on separable Banach
function spaces was studied in \cite{K-A14} (see also \cite{KS13}). On the other 
hand, Yu.~Karlovich \cite{K-Yu07} introduced the class $L^\infty(\R,V(\R))$ of 
bounded measurable $V(\R)$-valued functions on $\R$ where $V(\R)$ is the Banach 
algebra of all functions of bounded total variation. Symbols in $L^\infty(\R,V(\R))$
may have jump discontinuities in both variables. By using the boundedness of the operator
$S_*$ on $L^p(\R)$ for $1<p<\infty$, Yu.~Karlovich \cite[Theorem~3.1]{K-Yu07} 
proved the boundedness of $a(x,D)$ with $a\in L^\infty(\R,V(\R))$ on $L^p(\R)$
for $1<p<\infty$. Later on he extended this result to weighted Lebesgue spaces $L^p(\R,w)$
with Muckenhoupt weights $w$ (see \cite[Theorem~4.1]{K-Yu12}). One of the important
ingredients of those proofs is the pointwise inequality
\[
|(a(x,D)f)(x)|\le 2(S_*f)(x)\|a(x,\cdot)\|_V,
\quad x\in\R,
\]
for $f\in C_0^\infty(\R)$. From this inequality and the boundedness of $S_*$
we obtain the boundedness of $a(x,D)$ on separable Banach function spaces
$X(\R)$ under the assumption that $M$ is bounded on $X(\R)$ and $X'(\R)$
(see Theorem~\ref{th:boundedness-PDO-X}).

In Section~\ref{sec:VLS-boundedness} we specify the above results to the case 
of variable Lebesgue spaces $L^{p(\cdot)}(\R^n)$. If the variable exponent
$p:\R^n\to[1,\infty]$ is bounded away from one and infinity, then in view of 
Diening's theorem \cite[Theorem~8.1]{D05}, the boundedness of $M$ is equivalent
to the boundedness of $M$ on its associate space. Hence all above results have 
simpler formulations in the case of variable Lebesgue spaces (see 
Section~\ref{sec:VLS-boundedness}).

We conclude the paper with a sufficient condition for the compactness of
pseudodifferential operators $a(x,D)$ with $L^\infty(\R,V(\R))$-symbols on
variable Lebesgue spaces $L^{p(\cdot)}(\R)$ (see Corollary~\ref{co:compactness-PDO}).
This result is obtained from Yu.~Karlovich's result \cite[Theorem~4.1]{K-Yu07}
for standard Lebesgue spaces by transferring the compactness property from standard
to variable Lebesgue spaces with the aid of the Krasnosel'skii-type interpolation
theorem (see Section~\ref{sec:transfer-compactness}).
\section{Preliminaries}\label{sec:preliminaries}
\subsection{Banach function spaces}
The set of all Lebesgue measurable complex-valued functions on $\R^n$ is
denoted by $\cM$. Let $\cM^+$ be the subset of functions in $\cM$ whose
values lie  in $[0,\infty]$. The characteristic function of a measurable
set $E\subset\R^n$ is denoted by $\chi_E$ and the Lebesgue measure of $E$
is denoted by $|E|$.
\begin{definition}[{\cite[Chap.~1, Definition~1.1]{BS88}}]
\label{def-BFS}
A mapping $\rho:\cM^+\to [0,\infty]$ is
called a {\it Banach function norm} if, for all functions $f,g,
f_n \ (n\in\N)$ in $\cM^+$, for all constants $a\ge 0$, and for
all measurable subsets $E$ of $\R^n$, the following properties
hold:
\begin{eqnarray*}
{\rm (A1)} & & \rho(f)=0  \Leftrightarrow  f=0\ \mbox{a.e.}, \quad
\rho(af)=a\rho(f), \quad
\rho(f+g) \le \rho(f)+\rho(g),\\
{\rm (A2)} & &0\le g \le f \ \mbox{a.e.} \ \Rightarrow \ \rho(g)
\le \rho(f)
\quad\mbox{(the lattice property)},\\
{\rm (A3)} & &0\le f_n \uparrow f \ \mbox{a.e.} \ \Rightarrow \
       \rho(f_n) \uparrow \rho(f)\quad\mbox{(the Fatou property)},\\
{\rm (A4)} & & |E|<\infty \Rightarrow \rho(\chi_E) <\infty,\\
{\rm (A5)} & & |E|<\infty \Rightarrow \int_E f(x)\,dx \le C_E\rho(f)
\end{eqnarray*}
with $C_E \in (0,\infty)$ which may depend on $E$ and $\rho$ but is
independent of $f$.
\end{definition}
When functions differing only on a set of measure zero are identified,
the set $X(\R^n)$ of all functions $f\in\cM$ for which $\rho(|f|)<\infty$ is
called a \textit{Banach function space}. For each $f\in X(\R^n)$, the norm of
$f$ is defined by
\[
\|f\|_{X(\R^n)} :=\rho(|f|).
\]
The set $X(\R^n)$ under the natural linear space operations and under this norm
becomes a Banach space (see \cite[Chap.~1, Theorems~1.4 and~1.6]{BS88}).

The norm of a bounded sublinear operator $A$ on a Banach function space
$X(\R^n)$ will be denoted by $\|A\|_{\cB(X(\R^n))}$.

If $\rho$ is a Banach function norm, its associate norm $\rho'$ is
defined on $\cM^+$ by
\[
\rho'(g):=\sup\left\{
\int_{\R^n} f(x)g(x)\,dx \ : \ f\in \cM^+, \ \rho(f) \le 1
\right\}, \quad g\in \cM^+.
\]
It is a Banach function norm itself \cite[Chap.~1, Theorem~2.2]{BS88}.
The Banach function space $X'(\R^n)$ determined by the Banach function norm
$\rho'$ is called the \textit{associate space} (\textit{K\"othe dual}) of $X(\R^n)$.
The Lebesgue space $L^p(\R^n)$, $1\le p\le\infty$, are the archetypical
example of Banach function spaces. Other classical examples of Banach function
spaces are Orlicz spaces, rearrangement-invariant spaces, and variable Lebesgue
spaces $L^{p(\cdot)}(\R^n)$.
\subsection{Density of bounded and smooth compactly supported functions in separable Banach function spaces}
The proof of the following fact is standard. For details, see \cite[Lemma~2.10(b)]{KS14},
where it was proved for $n=1$. The proof for arbitrary $n$ is a minor modification
of that one.
\begin{lemma}\label{le:density}
The sets $L_0^\infty(\R^n)$ and $C_0^\infty(\R^n)$ are dense in a separable Banach function space $X(\R^n)$.
\end{lemma}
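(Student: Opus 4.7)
The plan is to reduce the statement to two successive approximation steps using the fact that, in a separable Banach function space, every element has absolutely continuous norm. This connection between separability and absolute continuity of the norm is Theorem~5.5 of Chap.~1 in \cite{BS88}, and it gives us the key tool: a dominated convergence theorem in $X(\R^n)$. Namely, if $h_k\to h$ a.e.\ and $|h_k|\le g$ a.e.\ for some $g\in X(\R^n)$, then $\|h_k-h\|_{X(\R^n)}\to 0$; indeed, $|h_k-h|\to 0$ a.e.\ and $|h_k-h|\le 2g$, so absolute continuity of the norm of $2g$ applied to the sets $E_m=\bigcup_{k\ge m}\{|h_k-h|>\varepsilon\}$ (whose measures shrink to $0$ a.e.) yields the conclusion by a standard Egorov-type argument combined with (A4).

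First I would approximate an arbitrary $f\in X(\R^n)$ by bounded compactly supported functions. Define
\[
f_k(x):=f(x)\,\chi_{\{|f|\le k\}\cap B(0,k)}(x),
\quad k\in\N,
\]
where $B(0,k)$ is the ball of radius $k$ centered at the origin. Then $f_k\in L_0^\infty(\R^n)$, $|f_k|\le|f|\in X(\R^n)$, and $f_k\to f$ a.e. By the dominated convergence statement above, $\|f_k-f\|_{X(\R^n)}\to 0$, which gives density of $L_0^\infty(\R^n)$ in $X(\R^n)$.

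Next I would show that $C_0^\infty(\R^n)$ is dense in $L_0^\infty(\R^n)$ with respect to the $X(\R^n)$-norm, and hence in $X(\R^n)$. Fix $g\in L_0^\infty(\R^n)$ with $\operatorname{supp}g\subset K$ for some compact $K$, and a standard nonnegative mollifier $\varphi\in C_0^\infty(\R^n)$ with $\int\varphi=1$ and $\operatorname{supp}\varphi\subset B(0,1)$. Set $g_\varepsilon:=g\ast\varphi_\varepsilon$ with $\varphi_\varepsilon(x)=\varepsilon^{-n}\varphi(x/\varepsilon)$. Then $g_\varepsilon\in C_0^\infty(\R^n)$, $\operatorname{supp}g_\varepsilon\subset K':=K+\overline{B(0,1)}$ for all $\varepsilon\in(0,1]$, and $\|g_\varepsilon\|_\infty\le\|g\|_\infty$. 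Standard properties of mollification give $g_\varepsilon\to g$ a.e.\ (along any sequence $\varepsilon_j\to 0$, after passing to a subsequence if necessary, which suffices since we want norm convergence). Moreover, $|g_\varepsilon-g|\le 2\|g\|_\infty\,\chi_{K'}$, and the right-hand side lies in $X(\R^n)$ by axiom~(A4). Dominated convergence in $X(\R^n)$ then yields $\|g_\varepsilon-g\|_{X(\R^n)}\to 0$.

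The principal obstacle is justifying the dominated convergence theorem in $X(\R^n)$ from separability; once this is in hand, both truncation and mollification are routine. Combining the two steps, for $f\in X(\R^n)$ and $\eta>0$, choose $k$ with $\|f-f_k\|_{X(\R^n)}<\eta/2$ and then $\varepsilon$ with $\|(f_k)_\varepsilon-f_k\|_{X(\R^n)}<\eta/2$, producing $(f_k)_\varepsilon\in C_0^\infty(\R^n)$ within $\eta$ of $f$. This completes the proof of the lemma.
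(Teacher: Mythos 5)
Your overall structure is correct and is essentially the standard argument that the paper itself does not spell out but delegates to \cite[Lemma~2.10(b)]{KS14}: separability gives absolutely continuous norm via \cite[Chap.~1, Section~5]{BS88}, then truncation gives density of $L_0^\infty(\R^n)$, and mollification on a fixed compact neighbourhood of the support (with the dominating function $2\|g\|_\infty\chi_{K'}\in X(\R^n)$ by (A4)) gives density of $C_0^\infty(\R^n)$. Both applications of the dominated-convergence principle are legitimate, so the proof goes through.

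The one soft spot is your inline justification of that dominated convergence theorem. The sketch with $E_m=\bigcup_{k\ge m}\{|h_k-h|>\eps\}$ has two problems on $\R^n$, which has infinite measure: first, $|E_m|$ need not tend to $0$ (monotone sets decreasing to a null set need not have measures tending to zero when $|E_1|=\infty$); second, and more seriously, on the complement $E_m^c$ you only get $|h_k-h|\le\eps$, but $\eps\chi_{E_m^c}$ need not belong to $X(\R^n)$ (take $X=L^1$), so the complementary term cannot be estimated this way. The standard proof (this is \cite[Chap.~1, Proposition~3.6]{BS88}, which you could simply cite instead of sketching) uses the sets $E_m=\bigcup_{k\ge m}\{|h_k-h|>\eps\, g\}$: these still decrease to a null set, the term over $E_m$ is handled by absolute continuity of the norm of $g$, and on the complement $|h_k-h|\le\eps g$, whose norm is at most $\eps\|g\|_{X(\R^n)}$. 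With that citation (or correction) in place, your argument is complete; note also that your truncation step does not even need the dominated convergence theorem, since $f-f_k=f\chi_{A_k}$ with $A_k:=\{|f|>k\}\cup(\R^n\setminus B(0,k))$ decreasing to a null set, so absolute continuity of the norm of $f$ applies directly.
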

\subsection{Nonnegative sublinear operators on Banach function spaces}
The operators $T^\Phi$, $\cC$, and $S_{(a,b)}$ although nonlinear, are examples of sublinear
operators that assume only nonnegative values. Let us give a precise definition of this class
of operators.
Let $D$ be a linear subspace of $\cM$. An operator $T:D\to\cM$ is said to be nonnegative
sublinear (cf. \cite[p.~230]{BS88}) if
\begin{equation}\label{eq:nonnegative-sublinear}
0\le T(f+g)\le Tf+Tg,
\quad
T(\lambda f)=|\lambda|Tf
\quad\mbox{a.e. on }\R^n
\end{equation}
for all $f,g\in D$ and all constants $\lambda\in\C$. The following result is well known
for linear operators. With the property
\[
|Tf-Tg|\le |T(f-g)|=T(f-g),
\quad f,g\in D,
\]
which is an immediate consequence of \eqref{eq:nonnegative-sublinear}, essentially the
same proof establishes the result also for nonnegative sublinear operators.
\begin{lemma}\label{le:extension}
Let $D$ be a dense linear subspace of a Banach function space $X(\R^n)$ and $T:D\to\cM$
be a nonnegative sublinear operator. If there exists a positive constant $C$ such that
\[
\|Tf\|_{X(\R^n)}\le C\|f\|_{X(\R^n)}
\quad\mbox{for all}\quad f\in D,
\]
then $T$ has a unique extension to a nonnegative sublinear operator $\widetilde{T}:X(\R^n)\to\cM$
such that
\[
\|\widetilde{T}f\|_{X(\R^n)}\le C\|f\|_{X(\R^n)}
\quad\mbox{for all}\quad f\in X(\R^n).
\]
\end{lemma}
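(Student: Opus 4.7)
The plan is to follow the classical extension-by-density argument for bounded linear operators, with the pointwise inequality $|Tf-Tg|\le T(f-g)$ (isolated by the authors immediately before the statement) standing in for linearity. That inequality itself drops out of \eqref{eq:nonnegative-sublinear}: combine $Tf\le T(f-g)+Tg$ and $Tg\le T(g-f)+Tf$ with $T(g-f)=T(f-g)$, the latter being homogeneity at $\lambda=-1$. The driving structural inputs are density of $D$ in $X(\R^n)$ and completeness of $X(\R^n)$.

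First I would fix $f\in X(\R^n)$ and, by density, choose $(f_k)\subset D$ with $\|f_k-f\|_{X(\R^n)}\to 0$. Applying the pointwise estimate to $f_k,f_j\in D$, together with the lattice property (A2) and the hypothesis on $T$, gives
\[
\|Tf_k-Tf_j\|_{X(\R^n)}\le\|T(f_k-f_j)\|_{X(\R^n)}\le C\|f_k-f_j\|_{X(\R^n)},
\]
so $(Tf_k)$ is Cauchy in $X(\R^n)$. By completeness the sequence converges in norm to some $h\in X(\R^n)\subset\cM$, and I would set $\widetilde{T}f:=h$. Independence of the approximating sequence is immediate: any other $(g_k)\subset D$ with $g_k\to f$ satisfies $\|Tf_k-Tg_k\|_{X(\R^n)}\le C\|f_k-g_k\|_{X(\R^n)}\to 0$, since $f_k-g_k\in D$. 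Taking $f_k\equiv f$ for $f\in D$ recovers $\widetilde{T}f=Tf$, and the same Cauchy estimate applied to a hypothetical second extension enforces uniqueness.

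To conclude, I would verify \eqref{eq:nonnegative-sublinear} and the norm bound for $\widetilde{T}$. The norm inequality follows by passing to the limit in $\|Tf_k\|_{X(\R^n)}\le C\|f_k\|_{X(\R^n)}$; homogeneity follows from $T(\lambda f_k)=|\lambda|Tf_k$; and for subadditivity, given $f,g\in X(\R^n)$ with approximants $f_k,g_k\in D$, the sequences $Tf_k+Tg_k$ and $T(f_k+g_k)$ converge in $X(\R^n)$ to $\widetilde{T}f+\widetilde{T}g$ and $\widetilde{T}(f+g)$, respectively. The only delicate point is the passage to the limit in $0\le T(f_k+g_k)\le Tf_k+Tg_k$, which must be done almost everywhere. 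Axiom (A5) of Definition~\ref{def-BFS} guarantees that norm convergence in $X(\R^n)$ implies $L^1$-convergence on every set of finite measure, hence convergence in measure; exhausting $\R^n$ by countably many sets of finite measure and diagonalizing produces a common subsequence along which $Tf_k$, $Tg_k$, and $T(f_k+g_k)$ all converge a.e. Taking the a.e.\ limit along this subsequence delivers $0\le\widetilde{T}(f+g)\le\widetilde{T}f+\widetilde{T}g$. This last extraction is the main obstacle; the remainder of the argument is a faithful transcription of the linear-operator case.
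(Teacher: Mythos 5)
Your proposal is correct and is precisely the argument the paper intends: the paper gives no detailed proof but states that the classical extension-by-density argument for bounded linear operators carries over verbatim once $|Tf-Tg|\le T(f-g)$ replaces linearity, which is exactly what you execute. Your careful handling of the a.e.\ passage to the limit via Axiom (A5), local convergence in measure, and a diagonal subsequence fills in the details the paper leaves implicit, and it is done correctly.
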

In what follows we will use the same notation for an operator defined
on a dense subspace and for its bounded extension to the whole space.
\subsection{Self-improving property of maximal operators on Banach function spaces}
\label{subsec:self-improving}
If $1<q<\infty$, then from the H\"older inequality one can immediately get that
\[
(Mf)(x)\le (M_rf)(x)\quad \quad\mbox{for a.e.}\quad x\in\R^n.
\]
Thus, the boundedness of any $M_r$, $1<r<\infty$, on a Banach function space
$X(\R^n)$  immediately implies the boundedness of $M$. A partial converse
of this fact, called a \textit{self-improving property} of the Hardy-Littlewood
maximal operator, is also true. It was proved by Lerner and P\'erez \cite{LP07}
(see also \cite{LO10} for another proof)
in a more general setting of quasi-Banach function spaces.
\begin{theorem}[{\cite[Corollary~1.3]{LP07}}]
\label{th:Lerner-Perez}
Let $X(\R^n)$ be a Banach function space. Then $M$ is bounded on $X(\R^n)$ if
and only if $M_r$ is bounded on $X(\R^n)$ for some $r\in(1,\infty)$.
\end{theorem}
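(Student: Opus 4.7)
The forward direction is immediate: for any $r\in(1,\infty)$, H\"older's inequality gives the pointwise bound $Mf(x)\le M_rf(x)$, and the lattice property (A2) transfers the operator bound from $M_r$ to $M$ on $X(\R^n)$. I focus on the converse implication.

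Assume $M$ is bounded on $X(\R^n)$ with operator norm $A:=\|M\|_{\cB(X(\R^n))}$. The plan is, for each non-negative $f\in X(\R^n)$, to exhibit a Muckenhoupt $A_1$ weight $Rf$ that dominates $f$ pointwise, has $X$-norm comparable to $\|f\|_{X(\R^n)}$, and has $A_1$ constant bounded in terms of $A$ alone. The construction is the Rubio de Francia iteration
\[
Rf:=\sum_{k=0}^\infty \frac{M^kf}{(2A)^k},\qquad M^0f:=f,\quad M^kf:=M(M^{k-1}f).
\]
Iterating the boundedness of $M$ yields $\|M^kf\|_{X(\R^n)}\le A^k\|f\|_{X(\R^n)}$, and the Fatou property (A3) applied to partial sums gives $\|Rf\|_{X(\R^n)}\le 2\|f\|_{X(\R^n)}$; in particular $Rf<\infty$ a.e. The inequality $Rf\ge f$ is immediate from $k=0$, and a telescoping computation using sublinearity of $M$ gives $M(Rf)\le 2A\cdot Rf$, so $Rf$ is an $A_1$ weight with $[Rf]_{A_1}\le 2A$.

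The decisive input is the classical reverse H\"older inequality for $A_1$ weights: there exist $s=s(2A)>1$ and $C=C(2A)>0$, depending only on the $A_1$ constant, such that for every cube $Q$,
\[
\left(\frac{1}{|Q|}\int_Q (Rf)^s\,dy\right)^{1/s}\le C\cdot\frac{1}{|Q|}\int_Q Rf\,dy\le 2AC\cdot Rf(x) \quad\text{for a.e. } x\in Q.
\]
Taking the supremum over cubes $Q\ni x$ yields $M_s(Rf)(x)\le 2AC\cdot Rf(x)$ a.e. Since $M_s$ is monotone and $M_sf=M_s|f|\le M_s(R|f|)$, combining this with $\|R|f|\|_{X(\R^n)}\le 2\|f\|_{X(\R^n)}$ gives $\|M_sf\|_{X(\R^n)}\le 4AC\|f\|_{X(\R^n)}$, which establishes the boundedness of $M_r$ on $X(\R^n)$ with $r:=s>1$.

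The \emph{main obstacle} is the reverse H\"older inequality with constants depending only on $[w]_{A_1}$, a classical but non-trivial ingredient from Coifman-Fefferman/Muckenhoupt theory; everything else is formal manipulation of the Banach function space axioms together with the Rubio de Francia iteration, which requires only that $M$ be bounded on $X(\R^n)$ itself (not on $X'(\R^n)$).
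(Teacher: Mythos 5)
Your proof is correct, but there is nothing in the paper to compare it against: the paper does not prove this theorem at all, it imports it as \cite[Corollary~1.3]{LP07} (mentioning \cite{LO10} for an alternative proof), and Subsection~\ref{subsec:self-improving} records only the easy direction, via the same pointwise bound $Mf\le M_r f$ and Axiom (A2) that you use. Your converse is a genuine, self-contained proof in the Banach function space setting actually stated here, and each step checks out: the bound $\|Rf\|_{X(\R^n)}\le 2\|f\|_{X(\R^n)}$ follows from the triangle inequality on partial sums together with the Fatou property (A3); the bound $M(Rf)\le 2A\,Rf$ a.e. uses countable subadditivity of $M$ on nonnegative functions (monotone convergence inside the cube averages); the Coifman--Fefferman reverse H\"older inequality supplies $s>1$ and $C$ depending only on $[Rf]_{A_1}\le 2A$ and the dimension $n$, hence uniformly in $f$; and the passage to $M_s(Rf)\le 2AC\,Rf$ a.e. is legitimate because the only exceptional set involved is the single null set where $M(Rf)\le 2A\,Rf$ fails, so taking the supremum over uncountably many cubes causes no measurability problem. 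The difference from the cited source is one of generality and machinery: Lerner and P\'erez prove the statement for quasi-Banach function spaces (as the paper itself remarks), a setting in which your summation step breaks down as written, since the triangle inequality fails and would need repair (e.g., an Aoki--Rolewicz renorming), and their argument proceeds through an extension of the Lorentz--Shimogaki theorem rather than through $A_1$ weights; your Rubio de Francia plus reverse H\"older route is shorter and classical, and, as you rightly emphasize, it uses only the boundedness of $M$ on $X(\R^n)$ itself, never on the associate space, which is exactly what the theorem asserts and what its application in the proof of Theorem~\ref{th:main} requires.
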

\subsection{The Fefferman-Stein inequality for Banach function spaces}
\label{subsec:Fefferman-Stein}
 It is obvious that $M^\#f$ is pointwise dominated by $Mf$. Hence,
by Axiom (A2),
\[
\|M^\#f\|_{X(\R^n)}\le {\rm const}\|f\|_{X(\R^n)}
\quad\mbox{for}\quad f\in X(\R^n)
\]
whenever $M$ is bounded on $X(\R^n)$. The converse inequality for Lebesgue spaces
$L^p(\R^n)$, $1<p<\infty$, was proved by Fefferman and Stein (see
\cite[Theorem~5]{FS72} and also
\cite[Chap.~IV, Section~2.2]{S93}). The following extension of the Fefferman-Stein
inequality to Banach function spaces was proved by Lerner \cite{L10}.

Let $S_0(\R^n)$ be the space of all measurable functions $f$ on $\R^n$ such that
\[
|\{x\in\R^n:|f(x)|>\lambda\}|<\infty
\quad\mbox{for all}\quad \lambda>0.
\]
\begin{theorem}[{\cite[Corollary~4.2]{L10}}]
\label{th:Lerner}
Let $M$ be bounded on a Banach function space $X(\R^n)$. Then $M$ is bounded
on its associate space $X'(\R^n)$ if and only
if there exists a constant $C_\#>0$ such that, for all $f\in S_0(\R^n)$,
\[
\|f\|_{X(\R^n)}\le C_\#\|M^\#f\|_{X(\R^n)}.
\]
\end{theorem}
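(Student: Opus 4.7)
The plan is to prove both implications. The substantive direction is that $M$ bounded on $X(\R^n)$ and on $X'(\R^n)$ yields the Fefferman-Stein inequality; the converse is a more routine duality argument.

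For the main direction, I would use Lerner's local mean oscillation decomposition: for any measurable $f$ and any cube $Q_0\subset\R^n$, there is a sparse family $\mathcal{S}$ of dyadic subcubes of $Q_0$ such that
\[
|f(x)-m_{f}(Q_0)|\chi_{Q_0}(x)\;\le\; c_1\,(M^\# f)(x)\chi_{Q_0}(x)\;+\; c_2\sum_{Q\in\mathcal{S}} \bigl(\inf_{Q}M^\# f\bigr)\chi_{Q}(x)
\]
for a.e.\ $x\in\R^n$, where $m_f(Q_0)$ denotes a median of $f$ on $Q_0$. The sparseness condition---each $Q\in\mathcal{S}$ admits a measurable subset $E_Q\subset Q$ with $|E_Q|\ge\tfrac{1}{2}|Q|$ and the $E_Q$ pairwise disjoint---allows the crucial sparse-sum estimate to be obtained by duality. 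For nonnegative $h\in X'(\R^n)$,
\[
\int_{\R^n}\biggl(\sum_{Q\in\mathcal{S}}(\inf_{Q}M^\# f)\chi_Q\biggr)h\,dx\;\le\;2\int_{\R^n}(M^\# f)(x)(Mh)(x)\,dx\;\le\;2\|M^\# f\|_{X(\R^n)}\|Mh\|_{X'(\R^n)},
\]
where the first inequality uses the disjointness of the $E_Q$ (and that $\inf_Q M^\# f \le M^\# f$ on $E_Q$) and the second is H\"older's inequality for Banach function spaces. By hypothesis $\|Mh\|_{X'(\R^n)}\le C\|h\|_{X'(\R^n)}$. Taking the supremum over $\|h\|_{X'(\R^n)}\le 1$ and invoking the Lorentz--Luxemburg duality identity $\|g\|_{X(\R^n)}=\sup\{\int|gh|\,dx:\|h\|_{X'(\R^n)}\le 1\}$ controls the sparse sum in $X(\R^n)$. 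Letting $Q_0$ exhaust $\R^n$ along a sequence of nested cubes, and using $f\in S_0(\R^n)$ to force $m_f(Q_0)\to 0$, one concludes via the Fatou property (Axiom (A3)) that $\|f\|_{X(\R^n)}\le C\|M^\# f\|_{X(\R^n)}$.

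The converse, that the Fefferman-Stein inequality implies $M\in \cB(X'(\R^n))$, follows by a duality argument using $X''(\R^n)=X(\R^n)$: unpacking the estimate $\|f\|_{X(\R^n)}\le C_\#\|M^\#f\|_{X(\R^n)}$ against suitable test functions built from nonnegative $g\in X'(\R^n)$ yields $\|Mg\|_{X'(\R^n)}\le C\|g\|_{X'(\R^n)}$ after a Calder\'on--Zygmund stopping-time decomposition. The main obstacle is the sparse-sum estimate in the first direction, since in an arbitrary (possibly non-rearrangement-invariant) Banach function space the standard weighted $L^p$ tools---Rubio de Francia extrapolation, $A_\infty$ inequalities, good-$\lambda$ plus layer-cake integration---are not directly available. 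Lerner's insight is that the local mean oscillation formula isolates all of the combinatorial content, so that only the Carleson-type sparseness of $\mathcal{S}$, H\"older's inequality in $X(\R^n)$, and the single quantitative hypothesis $M\in\cB(X'(\R^n))$ are needed.
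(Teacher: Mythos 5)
First, a point of reference: the paper does not prove this statement at all --- Theorem~\ref{th:Lerner} is quoted verbatim from Lerner \cite{L10}, so your attempt can only be measured against Lerner's original argument. For the substantive direction ($M\in\cB(X'(\R^n))$ implies the Fefferman--Stein inequality), your outline is essentially that argument: the local mean oscillation decomposition on a cube $Q_0$, the observation that the local oscillations $\omega_\lambda(f;Q)$ are dominated by $\inf_Q M^\#f$, the estimate of the sparse sum by pairing with nonnegative $h\in X'(\R^n)$ via the pairwise disjoint majorizing sets $E_Q$, H\"older's inequality for Banach function spaces, the hypothesis on $M$ acting on $X'(\R^n)$, the Lorentz--Luxemburg identity $X''(\R^n)=X(\R^n)$, and finally $m_f(Q_0)\to 0$ (the only place where $f\in S_0(\R^n)$ enters) together with the Fatou property. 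Up to routine details (medians versus averages, dyadic versus arbitrary cubes), this half is correct and is exactly Lerner's method.

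The converse direction, however, is a genuine gap. You assert that the Fefferman--Stein inequality yields $M\in\cB(X'(\R^n))$ ``by duality \dots against suitable test functions \dots after a Calder\'on--Zygmund stopping-time decomposition,'' but you never identify the functions to which the hypothesis $\|F\|_{X(\R^n)}\le C_\#\|M^\#F\|_{X(\R^n)}$ is applied, and that is the entire difficulty: the hypothesis concerns $X$-norms while the conclusion concerns $X'$-norms, and $M$ admits no pointwise-controlled adjoint through which a naive duality could pass. Concretely, already on $\R$ no estimate of the form $\int_{\R}(Mg)f\,dx\le c\int_{\R}g\,(M^kf)\,dx$ can hold for any iterate $M^k$ of $M$: taking $g=n\chi_{[0,1/n]}$ and $f=\chi_{[0,1]}$, the left-hand side is at least $\log n$ while the right-hand side is at most $1$ since $M^kf\le 1$. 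A workable argument (in essence Lerner's) runs as follows: linearize $Mg$ by the Calder\'on--Zygmund stopping cubes $\mathcal{S}$ of $g$ (up to passing to shifted dyadic grids), so that $\int_{\R^n}(Mg)f\,dx\le c\sum_{Q\in\mathcal{S}}\langle g\rangle_Q\langle f\rangle_Q|Q| = c\int_{\R^n} g\,(A_{\mathcal{S}}f)\,dx$, where $\langle g\rangle_Q:=\frac{1}{|Q|}\int_Q|g|\,dx$ and $A_{\mathcal{S}}f:=\sum_{Q\in\mathcal{S}}\langle f\rangle_Q\chi_Q$; then apply the Fefferman--Stein hypothesis to $F=A_{\mathcal{S}}f$, using the pointwise bound $M^\#(A_{\mathcal{S}}f)\le c\,M_rf$ (uniform over sparse families, any $r>1$) together with the self-improving property of Theorem~\ref{th:Lerner-Perez} --- this is precisely where the standing hypothesis $M\in\cB(X(\R^n))$, which your sketch never invokes, gets consumed --- to conclude $\|A_{\mathcal{S}}f\|_{X(\R^n)}\le c\|f\|_{X(\R^n)}$ uniformly in $\mathcal{S}$; H\"older's inequality then gives $\int_{\R^n}(Mg)f\,dx\le c\|g\|_{X'(\R^n)}\|f\|_{X(\R^n)}$, i.e.\ $M\in\cB(X'(\R^n))$. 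Without this (or an equivalent) mechanism --- in particular, without a sharp-function estimate for the averaging operators $A_{\mathcal{S}}$ and without using the boundedness of $M$ on $X(\R^n)$ --- the second half of your proof does not exist.
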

\subsection{Pointwise inequality for the sharp maximal function of $T^\Phi$}
Let $1\le r<\infty$. Recall that a sublinear operator $A:L^r(\R^n)\to\cM$ is said to be of weak type $(r,r)$
if
\[
|\{x\in\R^n:|(Af)(x)|>\lambda\}|\le \frac{C^r}{\lambda^r}\int_{\R^n}|f(y)|^r\,dy
\]
for all $f\in L^r(\R^n)$ and $\lambda>0$, where $C$ is a positive constant independent
of $f$ and $\lambda$. It is well known that if $A$ is bounded on the standard Lebesgue
space $L^r(\R^n)$, then it is of weak type $(r,r)$.

Grafakos, Martell, and Soria \cite{GMS05} developed two alternative approaches to weighted
$L^r$ estimates for maximally modulated Calder\'on-Zygmund singular integral operators $T^\Phi$.
One is based on good-$\lambda$ inequalities, another rests on the following pointwise
estimate for the sharp maximal function of $T^\Phi$.
\begin{lemma}[{\cite[Proposition~4.1]{GMS05}}]
\label{le:GMS}
Suppose $T$ is a Calder\'on-Zygmund operator and $\Phi=\{\phi_\alpha\}_{\alpha\in\cA}$
is a family of measurable real-valued functions indexed by an arbitrary set $\cA$.
If $T^\Phi$ is of weak type $(r,r)$ for some $r\in(1,\infty)$,
then there is a positive constant
$C_r$ such that for every $f\in L_0^\infty(\R^n)$,
\[
M^\#(T^\Phi f)(x)\le C_r M_rf(x)\quad\mbox{for a.e.}\quad x\in\R^n.
\]
\end{lemma}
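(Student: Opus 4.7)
The plan follows the classical route for sharp-maximal-function estimates of Calder\'on--Zygmund operators, adapted to the modulated, sublinear setting. Fix $x\in\R^n$ and a cube $Q\ni x$; let $Q^*$ denote a sufficiently large concentric dilate of $Q$ (chosen so that the kernel smoothness inequality applies whenever $y,y'\in Q$ and $z\in(Q^*)^c$), and split $f=f_1+f_2$ with $f_1:=f\chi_{Q^*}$. I would then pick the constant $c_Q:=T^\Phi f_2(x_0)$ for some point $x_0\in Q$ lying outside a measure-zero set, and use the linearity of $T$ together with the elementary inequality $|\sup_\alpha a_\alpha-\sup_\alpha b_\alpha|\le\sup_\alpha|a_\alpha-b_\alpha|$ to write
\[
|T^\Phi f(y)-c_Q|\le T^\Phi f_1(y)+\sup_{\alpha\in\cA}|T(\cM^{\phi_\alpha}f_2)(y)-T(\cM^{\phi_\alpha}f_2)(x_0)|
\]
for a.e.\ $y\in Q$. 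Since $\frac{1}{|Q|}\int_Q|g-g_Q|\,dy\le 2\cdot\frac{1}{|Q|}\int_Q|g-c_Q|\,dy$ for any constant $c_Q$, it then suffices to bound the average over $Q$ of each summand on the right by a constant times $M_rf(x)$ and to take the supremum over $Q\ni x$.

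For the local piece, since $T^\Phi$ is of weak type $(r,r)$ with $r>1$, Kolmogorov's inequality gives $\frac{1}{|Q|}\int_Q T^\Phi f_1(y)\,dy\le c\,|Q|^{-1/r}\|f_1\|_{L^r(\R^n)}$, and since $|Q^*|$ is comparable to $|Q|$ this is dominated by a constant multiple of $\bigl(\frac{1}{|Q^*|}\int_{Q^*}|f(y)|^r\,dy\bigr)^{1/r}\le M_rf(x)$.

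For the far piece, since $\cM^{\phi_\alpha}f_2$ is supported in $(Q^*)^c$ and $y,x_0\in Q$, the integral representation of $T(\cM^{\phi_\alpha}f_2)$ is valid at both points. Using $|e^{-i\phi_\alpha(z)}|=1$ and the kernel smoothness hypothesis, one obtains uniformly in $\alpha$
\[
|T(\cM^{\phi_\alpha}f_2)(y)-T(\cM^{\phi_\alpha}f_2)(x_0)|\le c_0\int_{(Q^*)^c}\frac{|y-x_0|^\tau|f(z)|}{|x_0-z|^{n+\tau}}\,dz,
\]
and a standard dyadic decomposition of $(Q^*)^c$ into annuli of the form $2^{k+1}Q^*\setminus 2^kQ^*$ bounds this by a constant multiple of $Mf(x)\le M_rf(x)$.

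The main technical obstacle is making the oscillation estimate for the far piece insensitive to the supremum over $\alpha$. The decisive structural observation is that $|\cM^{\phi_\alpha}f_2|=|f_2|$, so that the kernel-smoothness bound above is independent of $\alpha$ and one may pass the supremum through the difference of the two values of $T(\cM^{\phi_\alpha}f_2)$ at no cost; without this, the maximally modulated nature of $T^\Phi$ would obstruct the standard argument. A minor measure-theoretic nuisance is the selection of $x_0\in Q$: one must pick it outside the null set on which the integral representation of $T(\cM^{\phi_\alpha}f_2)$ fails, which is routine. With these two estimates in hand, combining them and taking the supremum over $Q\ni x$ yields the claimed pointwise inequality.
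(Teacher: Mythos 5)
The paper does not contain a proof of this lemma: it is quoted verbatim from \cite[Proposition~4.1]{GMS05}, and your reconstruction is correct and essentially identical to the argument given in that reference. Namely, the standard sharp-maximal scheme (split $f=f\chi_{Q^*}+f_2$, compare with the constant $c_Q$ given by the far part at a fixed good point, bound the local part by Kolmogorov's inequality using the weak type $(r,r)$ hypothesis, and bound the oscillation of the far part by the kernel smoothness condition), with the same decisive observation that $|\cM^{\phi_\alpha}f_2|=|f_2|$ makes the far-field estimate uniform in $\alpha$, so the supremum over the modulation family costs nothing.
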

\section{Maximally modulated singular integrals on Banach function spaces}\label{sec:main}
\subsection{Boundedness of maximally modulated Calder\'on-Zygmund singular integral operators on Banach function spaces}
We are in a position to prove the main result of the paper.
\begin{theorem}\label{th:main}
Let $X(\R^n)$ be a separable Banach function space. Suppose the Hardy-Littlewood maximal
operator $M$ is bounded on $X(\R^n)$ and on its associate space $X'(\R^n)$.
Suppose $T$ is a Calder\'on-Zygmund operator and $\Phi=\{\phi_\alpha\}_{\alpha\in\cA}$
is a family of measurable real-valued functions indexed by an arbitrary set $\cA$.
If $T^\Phi$ is of weak type $(r,r)$ for all $r\in(1,\infty)$, then $T^\Phi$ extends to
a bounded operator on the space $X(\R^n)$.
\end{theorem}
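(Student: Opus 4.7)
\medskip

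\noindent\textbf{Proof proposal.} The plan is to combine the three ingredients flagged in the introduction in the obvious way: the pointwise sharp-maximal estimate of Lemma~\ref{le:GMS}, the Fefferman--Stein inequality of Theorem~\ref{th:Lerner}, and the self-improving property of Theorem~\ref{th:Lerner-Perez}, and then pass from the dense subspace $L_0^\infty(\R^n)$ to the whole space $X(\R^n)$ via Lemma~\ref{le:extension}. First I would note that $T^\Phi$ is a nonnegative sublinear operator on $L_0^\infty(\R^n)$ (taking the supremum of absolute values of the linear operators $T\cM^{\phi_\alpha}$ preserves \eqref{eq:nonnegative-sublinear}), and that by Lemma~\ref{le:density} the space $L_0^\infty(\R^n)$ is dense in the separable space $X(\R^n)$.

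Next, since $M$ is bounded on $X(\R^n)$, Theorem~\ref{th:Lerner-Perez} yields some $r\in(1,\infty)$ for which $M_r$ is bounded on $X(\R^n)$. For this $r$, the hypothesis delivers weak type $(r,r)$ of $T^\Phi$, so Lemma~\ref{le:GMS} provides a constant $C_r>0$ with
\[
M^\#(T^\Phi f)(x)\le C_r (M_rf)(x)\quad\text{for a.e. } x\in\R^n,\ f\in L_0^\infty(\R^n).
\]
To feed this into Theorem~\ref{th:Lerner}, I must check that $T^\Phi f$ belongs to $S_0(\R^n)$. This is where the weak-type hypothesis is used a second time: for $f\in L_0^\infty(\R^n)\subset L^r(\R^n)$ and every $\lambda>0$,
\[
\bigl|\{x\in\R^n:(T^\Phi f)(x)>\lambda\}\bigr|\le \frac{C^r}{\lambda^r}\int_{\R^n}|f(y)|^r\,dy<\infty,
\]
so $T^\Phi f\in S_0(\R^n)$. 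Since $M$ is also bounded on $X'(\R^n)$, Theorem~\ref{th:Lerner} gives a constant $C_\#>0$ such that
\[
\|T^\Phi f\|_{X(\R^n)}\le C_\#\,\|M^\#(T^\Phi f)\|_{X(\R^n)}\le C_\# C_r\,\|M_rf\|_{X(\R^n)}\le C_\# C_r\,\|M_r\|_{\cB(X(\R^n))}\,\|f\|_{X(\R^n)}
\]
for every $f\in L_0^\infty(\R^n)$, where the lattice property (A2) was used in the middle step. Finally, Lemma~\ref{le:extension} extends $T^\Phi$ uniquely to a bounded nonnegative sublinear operator on $X(\R^n)$.

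The only genuine subtlety is the one highlighted above: Theorem~\ref{th:Lerner} yields a norm estimate of $f$ by $M^\# f$ \emph{only} for $f\in S_0(\R^n)$, so one must verify that the weak-type $(r,r)$ hypothesis, applied for the particular $r$ produced by the self-improving property, places $T^\Phi f$ inside $S_0(\R^n)$ whenever $f\in L_0^\infty(\R^n)$. Everything else is bookkeeping: applying the three cited results in sequence and invoking the standard density/extension machinery of Lemmas~\ref{le:density} and~\ref{le:extension}.
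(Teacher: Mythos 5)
Your proposal is correct and follows essentially the same route as the paper's proof: Theorem~\ref{th:Lerner-Perez} to obtain a bounded $M_r$, Lemma~\ref{le:GMS} fed into Lerner's Fefferman--Stein inequality (Theorem~\ref{th:Lerner}), then density (Lemma~\ref{le:density}) and the extension Lemma~\ref{le:extension}. The only differences are cosmetic: the paper works on the dense subspace $C_0^\infty(\R^n)$ rather than $L_0^\infty(\R^n)$ (both are legitimate since Lemma~\ref{le:GMS} holds on $L_0^\infty(\R^n)$), and the paper merely asserts $T^\Phi f\in S_0(\R^n)$ whereas you supply the explicit weak-type computation, which is indeed the correct justification for that step.
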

\begin{proof}
We argue as in the proof of \cite[Theorem~1.2]{K-A14}.
Since $M$ is bounded on $X(\R^n)$, by Theorem~\ref{th:Lerner-Perez}, there is an $r\in(1,\infty)$
such that the maximal function $M_r$ is bounded on $X(\R^n)$, that is, there is a positive constant
$C$ such that
\begin{equation}\label{eq:main-1}
\|(M_r\varphi)\|_{X(\R^n)}\le C\|\varphi\|_{X(\R^n)}
\quad\mbox{for all}\quad \varphi\in X(\R^n).
\end{equation}
Assume that $f\in C_0^\infty(\R^n)$. By the hypothesis, $T^\Phi$ is of weak type $(r,r)$ and $M$
is bounded  on $X'(\R^n)$. Therefore, $T^\Phi f\in S_0(\R^n)$. Moreover, by Theorem~\ref{th:Lerner},
there exists a positive constant $C_\#$ such that
\begin{equation}\label{eq:main-2}
\|T^\Phi f\|_{X(\R^n)}\le C_\# \|M^\#(T^\Phi f)\|_{X(\R^n)}
\quad\mbox{for all}\quad
f\in C_0^\infty(\R^n).
\end{equation}
From Lemma~\ref{le:GMS} and Axioms (A1)--(A2) we conclude that there exists a positive constant $C_r$
such that
\begin{equation}\label{eq:main-3}
\|M^\#(T^\Phi f)\|_{X(\R^n)}
\le C_r\|M_r f\|_{X(\R^n)}
\quad\mbox{for all}\quad
f\in C_0^\infty(\R^n).
\end{equation}
Combining inequalities \eqref{eq:main-1}--\eqref{eq:main-3}, we arrive at
\[
\|T^\Phi f\|_{X(\R^n)}\le C C_\# C_r\|f\|_{X(\R^n)}
\quad\mbox{for all}\quad
f\in C_0^\infty(\R^n).
\]
To conclude the proof, it remains to recall that $C_0^\infty(\R^n)$ is dense in the separable
Banach function space $X(\R^n)$ in view of Lemma~\ref{le:density} and apply Lemma~\ref{le:extension}.
\end{proof}
\subsection{Boundedness of the maximally modulated Hilbert transform on standard Lebesgue spaces}
Fix $f\in L_{loc}^1(\R)$. Let $H_*$ be the maximal Hilbert transform given by
\[
(H_*f)(x):=\sup_{\eps>0}\left|\frac{1}{\pi}\int_{\R\setminus I(x,\eps)}\frac{f(y)}{x-y}\,dy\right|,
\]
where $I(x,\eps)=(x-\eps,x+\eps)$. Further, let $\cC_*$ be the maximally modulated maximal Hilbert
transform (called also the maximal Carleson operator) defined by
\[
(\cC_*f)(x):=\sup_{a\in\R}(H_*(\cM^{\psi_a} f))(x)
\quad\mbox{with}\quad
\psi_a(x)=a x,\quad a, x\in\R.
\]
It is easy to see that
\begin{equation}\label{eq:pointwise}
(\cC f)(x)\le(\cC_* f)(x),
\quad x\in\R.
\end{equation}
The boundedness of the operator $\cC_*$ on the standard Lebesgue spaces $L^r(\R)$ is proved, e.g.,
in \cite[Theorem~11.3.3]{G09} (see also \cite[Theorem~2.7]{K-Yu12}). From this observation and
\eqref{eq:pointwise} we get the following result (see also \cite[Theorem~2.1]{D91} and
\cite[Theorem~2.8]{K-Yu12}).
\begin{lemma}\label{le:boundedness-C-Lr}
The maximally modulated Hilbert transform $\cC$ is bounded on every standard Lebesgue space $L^r(\R)$ for $1<r<\infty$.
\end{lemma}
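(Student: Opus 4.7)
The plan is to deduce the $L^r$-boundedness of $\cC$ directly from the pointwise domination $(\cC f)(x)\le(\cC_* f)(x)$ recorded in \eqref{eq:pointwise}, combined with the already-established boundedness of the maximal Carleson operator $\cC_*$ on every $L^r(\R)$, $1<r<\infty$, cited from \cite[Theorem~11.3.3]{G09} (or \cite[Theorem~2.7]{K-Yu12}). There is no new analytic content to produce here; all the deep harmonic analysis has been outsourced to that cited theorem.

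First I would fix $r\in(1,\infty)$ and $f\in C_0^\infty(\R)$. Raising both sides of \eqref{eq:pointwise} to the $r$-th power, integrating over $\R$, and taking the $r$-th root (equivalently, using the lattice property of the norm on $L^r(\R)$, which is Axiom (A2) of Definition~\ref{def-BFS} applied in this classical setting), I would obtain
\[
\|\cC f\|_{L^r(\R)}\le\|\cC_* f\|_{L^r(\R)}.
\]
Then I would invoke the cited boundedness of $\cC_*$ on $L^r(\R)$ to produce a constant $C_r>0$ with
\[
\|\cC_* f\|_{L^r(\R)}\le C_r\|f\|_{L^r(\R)},
\]
so that the composition yields $\|\cC f\|_{L^r(\R)}\le C_r\|f\|_{L^r(\R)}$ for every $f\in C_0^\infty(\R)$.

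To conclude, I would observe that $\cC$ is a nonnegative sublinear operator in the sense of \eqref{eq:nonnegative-sublinear} on the dense subspace $C_0^\infty(\R)\subset L^r(\R)$, so Lemma~\ref{le:extension} (applied with $X(\R^n)=L^r(\R)$, which is a separable Banach function space for $1<r<\infty$) delivers a unique bounded extension of $\cC$ to all of $L^r(\R)$, with the same norm bound $C_r$.

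The only conceivable obstacle is making sure that all ingredients are assembled in a legal order: that the pointwise inequality \eqref{eq:pointwise} is interpreted for $f$ in a dense class on which both $\cC$ and $\cC_*$ are defined by their original integral formulas, and that the extension step via Lemma~\ref{le:extension} is genuinely available because $C_0^\infty(\R)$ is dense in $L^r(\R)$ (which follows, e.g., from Lemma~\ref{le:density}). Beyond this bookkeeping, the argument is essentially a two-line chain of inequalities.
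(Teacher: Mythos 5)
Your proposal is correct and is essentially the paper's own argument: the paper derives the lemma precisely from the pointwise domination \eqref{eq:pointwise} together with the boundedness of $\cC_*$ on $L^r(\R)$ cited from \cite[Theorem~11.3.3]{G09}. The only (harmless) difference is that you route the conclusion through a density-and-extension step via Lemma~\ref{le:extension}, whereas the paper treats \eqref{eq:pointwise} as holding for general $f$ and reads off the norm inequality directly.
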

\subsection{Boundedness of the maximally modulated Hilbert transform on separable Banach function spaces}
From Theorem~\ref{th:main} and Lemma~\ref{le:boundedness-C-Lr} we immediately get the following.
\begin{corollary}\label{co:boundedness-C-X}
Let $X(\R)$ be a separable Banach function space. Suppose the Hardy-Littlewood maximal
operator $M$ is bounded on $X(\R)$ and on its associate space $X'(\R)$. Then the maximally
modulated Hilbert transform $\cC$ extends to a bounded operator on $X(\R)$.
\end{corollary}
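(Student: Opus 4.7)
The plan is to verify that the maximally modulated Hilbert transform $\cC$ fits into the framework of Theorem~\ref{th:main} and then invoke that theorem directly. So the first step is to identify $\cC$ as the maximally modulated singular integral operator $H^\Psi$, where $H$ is the (scalar) Hilbert transform on $\R$ and $\Psi=\{\psi_\alpha(x)=\alpha x:\alpha\in\R\}$ is the family of linear modulation phases. Since $H$ is the prototypical Calder\'on-Zygmund operator on $\R$ (its kernel $K(x,y)=1/(\pi(x-y))$ satisfies both the size and smoothness estimates with $\tau=1$), the hypothesis ``$T$ is a Calder\'on-Zygmund operator'' in Theorem~\ref{th:main} is met with $T=H$.

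Next I would supply the weak type hypothesis. By Lemma~\ref{le:boundedness-C-Lr}, $\cC=H^\Psi$ is bounded on $L^r(\R)$ for every $r\in(1,\infty)$; boundedness on $L^r(\R)$ trivially implies the weak type $(r,r)$ bound. Hence the remaining hypothesis of Theorem~\ref{th:main} that ``$T^\Phi$ is of weak type $(r,r)$ for all $r\in(1,\infty)$'' is satisfied.

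With these ingredients in hand, the hypotheses on $X(\R)$ (separability, and boundedness of $M$ on both $X(\R)$ and its associate space $X'(\R)$) are exactly what is assumed in the corollary. Therefore Theorem~\ref{th:main} applies and yields a positive constant $C$ such that
\[
\|\cC f\|_{X(\R)}\le C\|f\|_{X(\R)}
\quad\text{for all}\quad f\in C_0^\infty(\R),
\]
and, via Lemma~\ref{le:density} and Lemma~\ref{le:extension}, $\cC$ extends uniquely to a bounded nonnegative sublinear operator on $X(\R)$.

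There is essentially no obstacle: this corollary is a direct specialization of Theorem~\ref{th:main} to $T=H$ and $\Phi=\Psi$, with Lemma~\ref{le:boundedness-C-Lr} supplying the missing weak type information. The only thing worth being slightly careful about is that $\cC$ is only sublinear rather than linear, but that subtlety is already handled by the use of Lemma~\ref{le:extension} inside the proof of Theorem~\ref{th:main}, so no additional work is needed here.
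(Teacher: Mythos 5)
Your proposal is correct and follows exactly the paper's route: the paper deduces the corollary directly from Theorem~\ref{th:main} and Lemma~\ref{le:boundedness-C-Lr}, with $T=H$ and $\Phi=\Psi$, just as you do. Your additional remarks (that the Hilbert kernel satisfies the Calder\'on--Zygmund conditions and that $L^r$-boundedness implies weak type $(r,r)$) merely spell out what the paper leaves implicit.
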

\section{Boundedness of pseudodifferential operators\\ with non-regular symbols on Banach function spaces}
\label{sec:application}
\subsection{Functions of bounded total variation}
Let $a$ be a complex-valued function of bounded total variation $V(a)$ on $\R$ where
\[
V(a):=\sup\left\{
\sum_{k=1}^n |a(x_k)-a(x_{k-1})|: -\infty<x_0<x_1<\dots<x_n<+\infty, n\in\N
\right\}
\]
Hence at every point $x\in\dot{\R}:=\R\cup\{\infty\}$ the one-sided limits
\[
a(x\pm 0)=\lim_{t\to x^\pm}a(t)
\]
exist, where $a(\pm\infty)=a(\infty\mp 0)$, and the set of discontinuities of $a$ is at most
countable (see, e.g., \cite[Chap. VIII, Sections 3 and 9]{N55}).
Without loss of generality we will assume that functions of bounded total variation
are continuous from the left at every discontinuity point $x\in\dot{R}$.
The set $V(\R)$ of all continuous from the left functions of bounded total variation
on $\R$ is a unital non-separable Banach algebra with the norm
\[
\|a\|_V:=\|a\|_{L^\infty(\R)}+V(a).
\]

By analogy with $V(a)=V_{-\infty}^{+\infty}(a)$, one can define the total variations
$V_c^d(a)$, $V_{-\infty}^c(a)$, and $V_d^{+\infty}(a)$ of a function $a:\R\to\C$ on
$[c,d]$, $(-\infty,c]$, and $[d,+\infty)$, taking, respectively, the partitions
\[
c=x_0<x_1<\dots<x_n=d,
\quad
-\infty<x_0<x_1<\dots<x_n=c,
\]
and $d=x_0<x_1<\dots<x_n<+\infty$.
\subsection{Non-regular symbols of pseudodifferential operators}
Following \cite{K-Yu07,K-Yu12}, we denote by $L^\infty(\R,V(\R))$ the set of
functions $a:\R\times\R\to\C$ such that $\widehat{a}:x\mapsto a(x,\cdot)$ is a bounded
measurable $V(\R)$-valued function on $\R$. Note that in view of non-separability of
the Banach space $V(\R)$, the measurability of $\widehat{a}$ means that the map
$\widehat{a}:\R\to V(\R)$ possesses the Luzin property: for any compact set $K\subset\R$
and any $\delta$ there is a compact set $K_\delta\subset K$ such that $|K\setminus K_\delta|<\delta$
and $\widehat{a}$ is continuous on $K_\delta$ (see, e.g., \cite[Chap. IV, Section 4, p.~487]{S67}).
This implies that the function $x\mapsto a(x,\lambda\pm 0)$ for all $\lambda\in\dot{\R}$
and the function $x\mapsto\|a(x,\cdot)\|_V$ are measurable on $\R$ as well. Note that
for almost all $x\in\R$ the limits $a(x,\lambda\pm 0)$ the limits exist for all $\lambda\in\dot{\R}$,
$a(x,\lambda)=a(x,\lambda-0)$ for all $\lambda\in\R$ and we put
\[
a(x,\pm\infty):=\lim_{\lambda\to\pm\infty}a(x,\lambda).
\]
Therefore, the functions $a(\cdot,\lambda\pm 0)$ for every $\lambda\in\dot{\R}$ and the function
$x\mapsto\|a(x,\cdot)\|_V$, where
\[
\|a(x,\cdot)\|_V:=\|a(x,\cdot)\|_{L^\infty(\R)}+V(a(x,\cdot)),
\]
belong to $L^\infty(\R)$. Clearly, $L^\infty(\R, V(\R))$ is a unital Banach algebra with
the norm
\[
\|a\|_{L^\infty(\R,V(\R))}=\operatornamewithlimits{ess\,sup}_{x\in\R}\|a(x,\cdot)\|_V.
\]
\subsection{Pointwise inequality for pseudodifferential operators}
The following pointwise estimate was obtained by Yuri Karlovich in the proof of
\cite[Theorem~3.1]{K-Yu07} and \cite[Theorem~4.1]{K-Yu12}.
\begin{lemma}\label{le:PDO-pointwise}
If $a\in L^\infty(\R,V(\R))$ and $f\in C_0^\infty(\R)$, then
\[
\big|(a(x,D)f)(x)\big|\le 2(S_*f)(x)\|a(x,\cdot)\|_V
\quad\mbox{for a.e.}\quad x\in\R.
\]
\end{lemma}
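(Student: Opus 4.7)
The approach I would take is to rewrite $(a(x,D)f)(x)$ as a single Fourier integral and then exploit the bounded-variation structure of $\lambda \mapsto a(x,\lambda)$ via its Stieltjes representation, so that partial Fourier integrals of the form $(S_{(\alpha,\beta)}f)(x)$ appear naturally and can be controlled pointwise by $(S_*f)(x)$.

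First, since $f\in C_0^\infty(\R)$ implies $\widehat{f}\in\mathcal{S}(\R)$ and $\lambda\mapsto a(x,\lambda)$ is essentially bounded, the iterated integral in \eqref{eq:PDO} is absolutely convergent for a.e.\ $x$, and Fubini reduces it to the single integral
\[
(a(x,D)f)(x) \;=\; \frac{1}{2\pi}\int_\R a(x,\lambda)\,\widehat{f}(\lambda)\,e^{ix\lambda}\,d\lambda.
\]
Next, I would use that, for a.e.\ $x\in\R$, the function $\lambda\mapsto a(x,\lambda)$ is left-continuous and of bounded variation, so it admits the representation
\[
a(x,\lambda) \;=\; a(x,-\infty) \;+\; \int_\R \chi_{(\mu,+\infty)}(\lambda)\,da(x,\mu),
\]
where $da(x,\cdot)$ is the finite Lebesgue--Stieltjes complex measure on $\R$ with total mass $V(a(x,\cdot))$. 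Substituting this into the single-integral formula and interchanging the $d\lambda$ and $da(x,\mu)$ integrations by Fubini (legitimate because $\widehat{f}$ has Schwartz decay and $|da(x,\cdot)|$ is finite), one arrives at
\[
(a(x,D)f)(x) \;=\; a(x,-\infty)\,f(x) \;+\; \int_\R (S_{(\mu,+\infty)}f)(x)\,da(x,\mu),
\]
where $(S_{(\mu,+\infty)}f)(x):=\lim_{N\to+\infty}(S_{(\mu,N)}f)(x)$ exists pointwise because $\widehat{f}\in\mathcal{S}(\R)$; for the first term I invoke Fourier inversion for $f$.

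The final step is to dominate both pieces by $(S_*f)(x)$. For every finite $N>\mu$, $|(S_{(\mu,N)}f)(x)|\le (S_*f)(x)$ by the definition \eqref{eq:S*-definition}, and letting $N\to+\infty$ gives $|(S_{(\mu,+\infty)}f)(x)|\le (S_*f)(x)$; the analogous limit as $\alpha\to-\infty$, $\beta\to+\infty$ in $(S_{(\alpha,\beta)}f)(x)$ yields $|f(x)|\le (S_*f)(x)$. Estimating the Stieltjes integral above by $(S_*f)(x)\,V(a(x,\cdot))$ and using both $|a(x,-\infty)|\le \|a(x,\cdot)\|_{L^\infty(\R)}\le\|a(x,\cdot)\|_V$ and $V(a(x,\cdot))\le\|a(x,\cdot)\|_V$ produces the factor $2$ in the claimed bound $2(S_*f)(x)\|a(x,\cdot)\|_V$.

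The main obstacle I expect to face is the careful justification of the two Fubini applications and, in particular, the joint measurability in $(x,\mu)$ of the integrand in the Stieltjes integral representation; both of these rest on the Luzin-type measurability of $\widehat{a}:\R\to V(\R)$ recalled earlier in the paper. Everything else is routine once the representation of $a(x,\cdot)$ as a Stieltjes integral is in place.
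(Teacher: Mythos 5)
Your proof is correct. Note that the paper itself does not prove this lemma: it is quoted from Yu.~Karlovich's work (\cite[Theorem~3.1]{K-Yu07} and \cite[Theorem~4.1]{K-Yu12}), and your argument is essentially the one given there. Karlovich integrates by parts in the Riemann--Stieltjes integral $\int_\R a(x,\lambda)\,dF_x(\lambda)$, where $F_x(\lambda)=(S_{(-\infty,\lambda)}f)(x)$, which is the same manipulation as your Fubini interchange with the kernel $\chi_{(\mu,+\infty)}(\lambda)$: in both versions the variation measure of the symbol ends up multiplying a partial Fourier integral that is dominated pointwise by $(S_*f)(x)$, while the boundary term $a(x,\pm\infty)f(x)$ is handled by Fourier inversion and $|f(x)|\le (S_*f)(x)$. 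Two minor remarks. First, your final bookkeeping is lossier than necessary: since $|a(x,-\infty)|\le\|a(x,\cdot)\|_{L^\infty(\R)}$, the two terms combine to give $(S_*f)(x)\bigl(\|a(x,\cdot)\|_{L^\infty(\R)}+V(a(x,\cdot))\bigr)=(S_*f)(x)\|a(x,\cdot)\|_V$, i.e.\ the inequality actually holds with constant $1$; bounding each term separately by $\|a(x,\cdot)\|_V$, as you do, yields the stated (weaker) constant $2$, so nothing is wrong. Second, the obstacle you flag --- joint measurability in $(x,\mu)$ --- does not actually arise: the claimed inequality is pointwise in $x$, so the entire argument (Stieltjes representation, Fubini in $(\lambda,\mu)$, domination by $S_*$) is carried out for each fixed $x$ in the full-measure set where $a(x,\cdot)\in V(\R)$. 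The Luzin-type measurability of $\widehat{a}$ recalled in the paper is needed only afterwards, to make sense of $x\mapsto\|a(x,\cdot)\|_V$ as an element of $L^\infty(\R)$ when the pointwise bound is used to estimate norms.
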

\subsection{Boundedness of the maximal singular integral operator $S_*$ on separable Banach function spaces}
We continue with the following result on the boundedness of the maximal singular
integral operator $S_*$ initially defined for $f\in C_0^\infty(\R)$ by \eqref{eq:S*-definition}.
\begin{lemma}\label{le:boundedness-S*-X}
Let $X(\R)$ be a separable Banach function space. Suppose the Hardy-Littlewood maximal
operator $M$ is bounded on $X(\R)$ and on its associate space $X'(\R)$. Then the
operator $S_*$, defined for the functions $f\in C_0^\infty(\R)$ by \eqref{eq:S*-definition},
extends to a bounded operator on the space $X(\R)$.
\end{lemma}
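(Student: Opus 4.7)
The plan is to chain together the three ingredients already established in the excerpt: the pointwise domination \eqref{eq:S*-C-pointwise}, the boundedness of $\cC$ on $X(\R)$ from Corollary~\ref{co:boundedness-C-X}, and the density-plus-extension machinery from Lemmas~\ref{le:density} and~\ref{le:extension}.

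First I would fix $f\in C_0^\infty(\R)$. By \eqref{eq:S*-C-pointwise}, one has $(S_*f)(x)\le(\cC f)(x)$ for a.e.\ $x\in\R$. Since the hypotheses of Corollary~\ref{co:boundedness-C-X} are exactly those assumed here, the maximally modulated Hilbert transform $\cC$ extends to a bounded operator on $X(\R)$; in particular there exists a constant $C>0$ with $\|\cC f\|_{X(\R)}\le C\|f\|_{X(\R)}$. Combining this with the lattice property (Axiom~(A2)) applied to the pointwise inequality yields
\[
\|S_*f\|_{X(\R)}\le\|\cC f\|_{X(\R)}\le C\|f\|_{X(\R)}
\quad\text{for all }f\in C_0^\infty(\R).
\]

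Next I would invoke Lemma~\ref{le:extension} to extend $S_*$ from the dense subspace $C_0^\infty(\R)\subset X(\R)$ (density supplied by Lemma~\ref{le:density}) to a bounded operator on all of $X(\R)$. To apply Lemma~\ref{le:extension} one must check that $S_*$ is nonnegative sublinear on $C_0^\infty(\R)$ in the sense of \eqref{eq:nonnegative-sublinear}; this is immediate from its definition \eqref{eq:S*-definition} together with the linearity of $f\mapsto S_{(a,b)}f$ in $f$ and the standard properties of the supremum of absolute values (the triangle inequality $|S_{(a,b)}(f+g)|\le|S_{(a,b)}f|+|S_{(a,b)}g|$ followed by taking suprema, and $|S_{(a,b)}(\lambda f)|=|\lambda|\,|S_{(a,b)}f|$).

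The only genuine subtlety is that the pointwise bound \eqref{eq:S*-C-pointwise} is stated only for $f\in C_0^\infty(\R)$, which is precisely why the argument is carried out on this dense subspace first and the extension is performed afterward; no additional obstacle appears. I would conclude by noting the standard convention adopted after Lemma~\ref{le:extension}: the extended operator is again denoted $S_*$, and it satisfies $\|S_*f\|_{X(\R)}\le C\|f\|_{X(\R)}$ for every $f\in X(\R)$.
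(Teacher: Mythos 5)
Your proposal is correct and follows essentially the same route as the paper: pointwise domination of $S_*$ by $\cC$, the lattice property (Axiom (A2)) together with Corollary~\ref{co:boundedness-C-X}, and then density (Lemma~\ref{le:density}) plus the extension Lemma~\ref{le:extension}. The only cosmetic difference is that the paper re-derives the bound $(S_*f)(x)\le(\cC f)(x)$ inside the proof via the identity expressing $S_{(a,b)}$ through modulated Hilbert transforms, whereas you cite \eqref{eq:S*-C-pointwise} directly; you also make explicit the sublinearity check needed for Lemma~\ref{le:extension}, which the paper leaves implicit.
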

\begin{proof}
Fix $f\in C_0^\infty(\R)$. It is not difficult to check (see, e.g., \cite[Chap.~2, Section 2.2]{D91}
and also \cite[p.~475]{G09}) that
\[
(S_{(a,b)}f)(x)=\frac{i}{2}\left\{\cM^{-\psi_a}\big(H(\cM^{\psi_a}f)\big)(x)-\cM^{-\psi_b}\big(H(\cM^{\psi_b}f)\big)(x)\right\},
\quad x\in\R,
\]
where $\psi_a(x)=ax$, $\psi_b(x)=bx$ and $-\infty<a<b<+\infty$. Therefore,
\begin{align*}
(S_* f)(x)
&=\sup_{-\infty <a<b<\infty}|(S_{(a,b)}f)(x)|
\\
&\le
\frac{1}{2}\sup_{a\in\R}\left|\big(H(\cM^{\psi_a}f)\big)(x)\right|
+
\frac{1}{2}\sup_{b\in\R}\left|\big(H(\cM^{\psi_b}f)\big)(x)\right|
\\
&=(\cC f)(x),
\quad x\in\R.
\end{align*}
From this inequality, Axioms (A1)-(A2), and Corollary~\ref{co:boundedness-C-X} we get
\[
\|S_*f\|_{X(\R)}\le\|\cC f\|_{X(\R)}\le\|\cC\|_{\cB(X(\R))}\|f\|_{X(\R)}
\quad\mbox{for}\quad f\in C_0^\infty(\R).
\]
It remains to apply Lemma~\ref{le:density}.
\end{proof}
\subsection{Boundedness of pseudodifferential operators with $L^\infty(\R,V(\R))$ symbols on Banach function spaces}
We are ready to prove the boundedness result for pseudodifferential operators with
non-regular symbols on separable Banach function spaces.
\begin{theorem}\label{th:boundedness-PDO-X}
Let $X(\R)$ be a separable Banach function space. Suppose the Hardy-Littlewood maximal
operator $M$ is bounded on $X(\R)$ and on its associate space $X'(\R)$. If
$a\in L^\infty(\R,V(\R))$, then the pseudodifferential operator $a(x,D)$, defined for
the functions $f\in C_0^\infty(\R)$ by the iterated integral \eqref{eq:PDO},
extends to a bounded linear operator on the space $X(\R)$ and
\[
\|a(x,D)\|_{\cB(X(\R))} \le 2\|S_*\|_{\cB(X(\R))}\|a\|_{L^\infty(\R,V(\R))}.
\]
\end{theorem}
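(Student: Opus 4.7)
The plan is to assemble the pointwise inequality for $a(x,D)$ in Lemma~\ref{le:PDO-pointwise} with the boundedness of the maximal operator $S_*$ on $X(\R)$ from Lemma~\ref{le:boundedness-S*-X}, and then extend by density using linearity of $a(x,D)$ and Lemma~\ref{le:density}. This is essentially a one-line assembly, since the heavy analytic machinery (the Fefferman--Stein inequality, the self-improving property of $M$, and the Carleson-operator estimates that feed into the boundedness of $S_*$) has already been put in place.

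First I would fix $f \in C_0^\infty(\R)$. Since $\|a(x,\cdot)\|_V \le \|a\|_{L^\infty(\R,V(\R))}$ for almost every $x \in \R$, Lemma~\ref{le:PDO-pointwise} yields
\[
\bigl|(a(x,D)f)(x)\bigr| \le 2\,(S_*f)(x)\,\|a\|_{L^\infty(\R,V(\R))}
\quad\mbox{for a.e.}\quad x \in \R.
\]
Applying Axioms (A1) and (A2) of the Banach function norm to this pointwise estimate, and then invoking Lemma~\ref{le:boundedness-S*-X}, I obtain
\[
\|a(x,D)f\|_{X(\R)}
\le 2\,\|a\|_{L^\infty(\R,V(\R))}\,\|S_*f\|_{X(\R)}
\le 2\,\|S_*\|_{\cB(X(\R))}\,\|a\|_{L^\infty(\R,V(\R))}\,\|f\|_{X(\R)}
\]
for every $f \in C_0^\infty(\R)$.

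Because $a(x,D)$ is \emph{linear} on $C_0^\infty(\R)$, and $C_0^\infty(\R)$ is dense in the separable Banach function space $X(\R)$ by Lemma~\ref{le:density}, the standard density argument for bounded linear operators yields a unique bounded linear extension of $a(x,D)$ to $X(\R)$ satisfying the claimed norm bound; Lemma~\ref{le:extension} is not needed here since linearity obviates the sublinear framework. I do not anticipate any genuine obstacle in carrying out this plan: the only point to be slightly careful about is that the pointwise estimate only supplies control of $|a(x,D)f(x)|$ a.e., but this is exactly what the lattice property (A2) is designed to convert into a norm estimate, after which the density-based extension is routine.
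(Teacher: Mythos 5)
Your proposal is correct and follows essentially the same route as the paper's own proof: the pointwise bound of Lemma~\ref{le:PDO-pointwise} combined with the essential supremum of $\|a(x,\cdot)\|_V$, the lattice axioms (A1)--(A2), the boundedness of $S_*$ from Lemma~\ref{le:boundedness-S*-X}, and density of $C_0^\infty(\R)$ via Lemma~\ref{le:density}. Your observation that the extension step only needs the standard bounded-linear-operator density argument (rather than the sublinear extension Lemma~\ref{le:extension}) matches what the paper does implicitly, since $a(x,D)$ is indeed linear.
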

\begin{proof}
From Lemma~\ref{le:PDO-pointwise}, axioms (A1)-(A2), and Lemma~\ref{le:boundedness-S*-X} we
obtain for $f\in C_0^\infty(\R)$,
\begin{align*}
\|a(x,D)f\|_{X(\R)}
&\le
2\|S_*f\|_{X(\R)}\operatornamewithlimits{ess\,sup}_{x\in\R}\|a(x,\cdot)\|_V
\\
&\le
2\|S_*\|_{\cB(X(\R))}\|a\|_{L^\infty(\R,V(\R))}\|f\|_{X(\R)}.
\end{align*}
Since $C_0^\infty(\R)$ is dense in the space $X(\R)$ in view of Lemma~\ref{le:density},
from the above estimate we arrive immediately at the desired conclusion.
\end{proof}
\section{Boundedness of maximally modulated Calder\'on-Zygmund operators \\
and pseudodifferential operators with non-regular symbols\\ on variable Lebesgue spaces}
\label{sec:VLS-boundedness}
\subsection{Variable Lebesgue spaces}\label{subsec:VLE}
Let $p\colon\R^n\to[1,\infty]$ be a measurable a.e. finite function. By
$L^{p(\cdot)}(\R^n)$ we denote the set of all complex-valued functions
$f$ on $\R$ such that
\[
I_{p(\cdot)}(f/\lambda):=\int_{\R^n} |f(x)/\lambda|^{p(x)} dx <\infty
\]
for some $\lambda>0$. This set becomes a Banach function space when
equipped with the norm
\[
\|f\|_{p(\cdot)}:=\inf\big\{\lambda>0: I_{p(\cdot)}(f/\lambda)\le 1\big\}.
\]
It is easy to see that if $p$ is constant, then $L^{p(\cdot)}(\R^n)$ is nothing but
the standard Lebesgue space $L^p(\R^n)$. The space $L^{p(\cdot)}(\R^n)$
is referred to as a variable Lebesgue space.

We will always suppose that
\begin{equation}\label{eq:exponents}
1<p_-:=\operatornamewithlimits{ess\,inf}_{x\in\R^n}p(x),
\quad
\operatornamewithlimits{ess\,sup}_{x\in\R^n}p(x)=:p_+<\infty.
\end{equation}
Under these conditions, the space $L^{p(\cdot)}(\R^n)$ is
separable and reflexive, and its associate space is isomorphic to
$L^{p'(\cdot)}(\R^n)$, where
\[
1/p(x)+1/p'(x)=1
\quad\mbox{for a.e.}\quad
x\in\R^n
\]
(see e.g. \cite[Chap.~2]{CF13} or \cite[Chap.~3]{DHHR11}).
\subsection{The Hardy-Littlewood maximal function on variable Lebesgue spaces}
\label{subsec:M-VLE}
By $\cB_M(\R^n)$ denote the set of all measurable functions $p:\R^n\to[1,\infty]$
such that \eqref{eq:exponents} holds and the Hardy-Littlewood
maximal operator is bounded on the variable Lebesgue space $L^{p(\cdot)}(\R^n)$.

To provide a simple sufficient conditions guaranteeing that $p\in\cB_M(\R^n)$,
we need the following definition. Given a function $r:\R^n\to\R$, one says that
$r$ is locally log-H\"older continuous if there exists a constant $C_0>0$ such that
\[
|r(x)-r(y)|\le \frac{C_0}{-\log|x-y|}
\]
for all $x,y\in\R^n$ such that $|x-y|<1/2$. One says that $r:\R^n\to\R$ is log-H\"older
continuous at infinity if there exist constants $C_\infty$ and $r_\infty$ such that
for all $x\in\R^n$,
\[
|r(x)-r_\infty|\le\frac{C_\infty}{\log(e+|x|)}.
\]
The class of functions $r:\R^n\to\R$ that are simultaneously locally log-H\"older
continuous and log-H\"older continuous at infinity is denoted by $LH(\R^n)$.
From \cite[Proposition~2.3 and Theorem~3.16]{CF13} we extract the following.
\begin{theorem}
Let $p\in LH(\R^n)$ satisfy \eqref{eq:exponents}. Then $p\in\cB_M(\R^n)$.
\end{theorem}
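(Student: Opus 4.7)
The plan is to combine the two cited results from the monograph \cite{CF13}. First, I would verify that the definitions of locally log-H\"older continuous and log-H\"older continuous at infinity given just before the theorem match those in \cite{CF13}: in both conventions one demands $|r(x)-r(y)|\le C_0/(-\log|x-y|)$ for $|x-y|<1/2$, and $|r(x)-r_\infty|\le C_\infty/\log(e+|x|)$ for all $x\in\R^n$. With this identification, the class $LH(\R^n)$ defined in the excerpt coincides with the log-H\"older class used in \cite[Chap.~2]{CF13}.

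Next, I would appeal to \cite[Proposition~2.3]{CF13}, which records the basic structural properties of log-H\"older continuous exponents that feed into the boundedness proof (typically, that $p\in LH(\R^n)$ together with \eqref{eq:exponents} implies that the conjugate exponent $p'$ also lies in $LH(\R^n)$ and satisfies $1<p'_-\le p'_+<\infty$, and that $p$ admits the averaging estimates on cubes required by the subsequent Calder\'on-Zygmund type arguments).

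Finally, I would invoke \cite[Theorem~3.16]{CF13}, which is the main theorem asserting that whenever $p\colon\R^n\to[1,\infty]$ satisfies \eqref{eq:exponents} and $p\in LH(\R^n)$, the Hardy-Littlewood maximal operator $M$ is bounded on the variable Lebesgue space $L^{p(\cdot)}(\R^n)$. By the very definition of $\cB_M(\R^n)$ given in Subsection~\ref{subsec:M-VLE}, this conclusion is precisely $p\in\cB_M(\R^n)$, completing the argument.

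The only obstacle here is bookkeeping: matching the constants and conventions in the definitions of $LH(\R^n)$ and of the bounds \eqref{eq:exponents} with those required in the hypotheses of \cite[Proposition~2.3 and Theorem~3.16]{CF13}. Since both conventions are standard in the variable-exponent literature and the statement is explicitly \emph{extracted} from these two references, no additional work beyond this verification is needed.
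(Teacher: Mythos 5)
Your proposal is correct and matches the paper exactly: the paper offers no proof of its own, stating only that the theorem is ``extracted'' from \cite[Proposition~2.3 and Theorem~3.16]{CF13}, which is precisely the combination you invoke. Your additional bookkeeping---checking that the definitions of $LH(\R^n)$ and condition \eqref{eq:exponents} match the conventions of \cite{CF13}, and unwinding the definition of $\cB_M(\R^n)$---is the same (implicit) content, just spelled out.
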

Although the latter result provides a nice sufficient condition for the boundedness
of the Hardy-Littlewood maximal operator on the variable Lebesgue space $L^{p(\cdot)}(\R^n)$,
it is not necessary. Notice that all functions in $LH(\R^n)$ are continuous and have
limits at infinity. Lerner \cite{L05} (see also \cite[Example~4.68]{CF13})
proved that if $p_0>1$ and $\mu\in\R$ is sufficiently close to zero, then
the following variable exponent
\[
p(x)=p_0+\mu\sin(\log\log(1+\max\{|x|,1/|x|\})),
\quad x\ne 0,
\]
belongs to $\cB_M(\R)$. It is clear that the function $p$ does not have limits
at zero or infinity. We refer to the recent monographs \cite{CF13,DHHR11} for further
discussions concerning the fascinating and still mysterious class $\cB_M(\R^n)$.

We will need the following remarkable result proved by Diening \cite[Theorem~8.1]{D05}
(see also \cite[Theorem~5.7.2]{DHHR11} and \cite[Corollary~4.64]{CF13}).
\begin{theorem}\label{th:Diening}
We have $p\in\cB_M(\R^n)$ if and only if $p'\in\cB_M(\R^n)$.
\end{theorem}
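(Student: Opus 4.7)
The plan is to find a condition on $p$ that characterizes membership in $\cB_M(\R^n)$ and is manifestly invariant under $p \mapsto p'$. Since averaging operators on cubes are self-adjoint, they furnish the right tool. For a countable family $\mathcal{Q}=\{Q_j\}$ of pairwise disjoint cubes in $\R^n$, define the averaging operator
\[
T_{\mathcal{Q}} f(x) := \sum_j f_{Q_j}\,\chi_{Q_j}(x),\quad x\in\R^n.
\]

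Step 1 (reduction to averaging operators). I would first prove that $p\in\cB_M(\R^n)$ if and only if $p$ satisfies \eqref{eq:exponents} and there is a constant $C>0$ such that
\[
\|T_{\mathcal{Q}} f\|_{p(\cdot)}\le C\,\|f\|_{p(\cdot)}
\]
uniformly over all disjoint families $\mathcal{Q}$ and all $f\in L^{p(\cdot)}(\R^n)$. The easy direction is clear: since $T_{\mathcal{Q}}|f|(x)\le (Mf)(x)$ pointwise, the lattice property transfers boundedness of $M$ to uniform boundedness of $T_{\mathcal{Q}}$. For the nontrivial direction, one first passes to the dyadic maximal operator $M^d$, decomposes the level sets $\{M^d f>2^k\}$ via the Calder\'on-Zygmund stopping-time construction into disjoint collections of maximal dyadic cubes $\mathcal{Q}_k$, and expresses $M^d f$ up to a bounded factor as a geometric series of the operators $T_{\mathcal{Q}_k}$. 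Summation in $L^{p(\cdot)}(\R^n)$ then yields the boundedness of $M^d$, and a standard translation argument covering $\R^n$ by finitely many shifted dyadic grids delivers the full operator $M$.

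Step 2 (self-duality of the averaging condition). The operator $T_{\mathcal{Q}}$ is self-adjoint with respect to the standard pairing $\langle f,g\rangle=\int_{\R^n}fg\,dx$, because
\[
\int_{\R^n}(T_{\mathcal{Q}} f)(x)\,g(x)\,dx=\sum_j f_{Q_j}\,g_{Q_j}\,|Q_j|=\int_{\R^n}f(x)\,(T_{\mathcal{Q}} g)(x)\,dx.
\]
Since the associate space of $L^{p(\cdot)}(\R^n)$ is isomorphic to $L^{p'(\cdot)}(\R^n)$ (as recalled in Section~\ref{subsec:VLE}), the norm-via-duality formula for Banach function spaces yields
\[
\|T_{\mathcal{Q}}\|_{\cB(L^{p(\cdot)}(\R^n))}\le c\,\|T_{\mathcal{Q}}\|_{\cB(L^{p'(\cdot)}(\R^n))}
\]
with a constant $c$ depending only on the isomorphism constants and hence independent of $\mathcal{Q}$; the reverse inequality holds by symmetry. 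Therefore, the uniform boundedness of the family $\{T_{\mathcal{Q}}\}$ on $L^{p(\cdot)}(\R^n)$ is the same condition as its uniform boundedness on $L^{p'(\cdot)}(\R^n)$.

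Combining the two steps and observing that \eqref{eq:exponents} is preserved under $p\mapsto p'$ (with $(p')_\pm=(p_\mp)'$), one concludes $p\in\cB_M(\R^n)$ if and only if $p'\in\cB_M(\R^n)$. The main obstacle is Step 1, specifically the implication that uniform boundedness of averaging operators suffices for the boundedness of $M^d$: the Calder\'on-Zygmund stopping-time argument and geometric summation in the parameter $2^k$ are routine for constant exponents, but for a variable $p$ one must control the cross-scale summability of the resulting norms in $L^{p(\cdot)}(\R^n)$. This is the technical heart of Diening's original proof and implicitly encodes the "key estimate" that makes the whole argument go through; everything else is comparatively routine.
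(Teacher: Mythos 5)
Note first that the paper contains no proof of Theorem~\ref{th:Diening}: it is quoted verbatim from Diening \cite[Theorem~8.1]{D05} (see also \cite[Theorem~5.7.2]{DHHR11} and \cite[Corollary~4.64]{CF13}). Your outline in fact reproduces the architecture of the proof in those sources: characterize membership in $\cB_M(\R^n)$ by the uniform boundedness of the averaging operators $T_{\mathcal{Q}}$ (Diening's class $\mathcal{A}$), and observe that this condition is manifestly self-dual, since $T_{\mathcal{Q}}$ is positive and self-adjoint, so the Banach-function-space duality $\|g\|_{X'}=\sup\{\int |fg|\,dx:\|f\|_X\le 1\}$ together with $(L^{p(\cdot)}(\R^n))'\cong L^{p'(\cdot)}(\R^n)$ and the Lorentz--Luxemburg theorem $X''=X$ transfers uniform bounds between $L^{p(\cdot)}(\R^n)$ and $L^{p'(\cdot)}(\R^n)$. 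Your Step~2, and the observation that condition \eqref{eq:exponents} is self-dual, are correct and complete as stated.

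The genuine gap is the hard implication in Step~1: uniform boundedness of the $T_{\mathcal{Q}}$ implies boundedness of $M$. This is the deep content of Diening's theorem, and the argument you sketch does not go through as described. The Calder\'on--Zygmund cubes $\mathcal{Q}_k$ produced at different heights $2^k$ overlap across $k$, so the pieces do not assemble into (a bounded multiple of) a single operator $T_{\mathcal{Q}}$ applied to $f$; and, more fundamentally, in $L^{p(\cdot)}(\R^n)$ the Luxemburg norm of a sum over scales cannot be decoupled into a sum of integrals, so the geometric summation in $2^k$ that is routine for constant exponents has no direct analogue (for constant exponents the whole theorem is vacuous anyway, since $M$ is automatically bounded). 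Overcoming exactly this obstruction is what Diening's key self-improvement (``left-openness'') lemma for class $\mathcal{A}$ accomplishes, and it occupies the bulk of \cite{D05} and of Chapter~5 of \cite{DHHR11}. Since you explicitly defer to that estimate rather than prove it, what you have is an accurate roadmap of the known proof, not a proof: the decisive step is missing. (This places you no worse than the paper itself, which only cites the result, but as a self-contained verification the attempt is incomplete.)
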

\subsection{Boundedness of maximally modulated Calder\'on-Zygmund singular integral 
operators on variable Lebesgue spaces}
From Theorems~\ref{th:main} and~\ref{th:Diening} we immediately get the following.
\begin{corollary}\label{co:boundedness-T-phi-Lp}
Let $p\in\cB_M(\R^n)$.
Suppose $T$ is a Calder\'on-Zygmund operator and $\Phi=\{\phi_\alpha\}_{\alpha\in\cA}$
is a family of measurable real-valued functions indexed by an arbitrary set $\cA$.
If $T^\Phi$ is of weak type $(r,r)$ for all $r\in(1,\infty)$, then $T^\Phi$ extends to
a bounded operator on the variable Lebesgue space $L^{p(\cdot)}(\R^n)$.
\end{corollary}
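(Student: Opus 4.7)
The plan is to recognize that this corollary is an immediate specialization of Theorem~\ref{th:main} to variable Lebesgue spaces, with the double hypothesis on $M$ collapsed to a single one via Diening's theorem. Concretely, I would set $X(\R^n) = L^{p(\cdot)}(\R^n)$ and verify, one by one, the hypotheses of Theorem~\ref{th:main}: that $X(\R^n)$ is a separable Banach function space, that $M$ is bounded on $X(\R^n)$, and that $M$ is bounded on $X'(\R^n)$.

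For separability and the identification of the associate space, I would simply invoke the standing assumption \eqref{eq:exponents} built into the definition of $\cB_M(\R^n)$ in Section~\ref{subsec:VLE}: under these bounds on the exponent, $L^{p(\cdot)}(\R^n)$ is a separable reflexive Banach function space whose associate space is (isomorphic to) $L^{p'(\cdot)}(\R^n)$. The boundedness of $M$ on $L^{p(\cdot)}(\R^n)$ holds by hypothesis, since $p\in\cB_M(\R^n)$ means exactly that. For the associate space, I would apply Theorem~\ref{th:Diening} to conclude that $p'\in\cB_M(\R^n)$, i.e., $M$ is bounded on $L^{p'(\cdot)}(\R^n)$.

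With all hypotheses of Theorem~\ref{th:main} verified and the assumption that $T^\Phi$ is of weak type $(r,r)$ for every $r\in(1,\infty)$ carried over verbatim, the conclusion that $T^\Phi$ extends to a bounded operator on $L^{p(\cdot)}(\R^n)$ follows at once. There is no real obstacle: all the analytic content lives in Theorem~\ref{th:main} (which in turn packages the Fefferman--Stein inequality of Lerner, the self-improving property of Lerner--P\'erez, and the Grafakos--Martell--Soria pointwise estimate for $M^\#(T^\Phi f)$) and in Diening's theorem, both of which are already at our disposal.
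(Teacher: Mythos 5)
Your proposal is correct and follows exactly the paper's route: the paper derives this corollary by specializing Theorem~\ref{th:main} to $X(\R^n)=L^{p(\cdot)}(\R^n)$, using the standing assumptions \eqref{eq:exponents} for separability and the identification $X'(\R^n)\cong L^{p'(\cdot)}(\R^n)$, and invoking Theorem~\ref{th:Diening} to pass from $p\in\cB_M(\R^n)$ to $p'\in\cB_M(\R^n)$. Nothing is missing; your verification of the hypotheses matches the paper's (implicit, one-line) argument.
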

In turn, Corollary~\ref{co:boundedness-T-phi-Lp} and Lemma~\ref{le:boundedness-C-Lr} yield
the following.
\begin{corollary}\label{co:boundedness-C-Lp}
If $p\in\cB_M(\R)$, then the maximally modulated Hilbert transform $\cC$ extends to a
bounded operator on the variable Lebesgue space $L^{p(\cdot)}(\R)$.
\end{corollary}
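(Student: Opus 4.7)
The plan is to obtain this corollary as a direct specialization of Corollary~\ref{co:boundedness-T-phi-Lp} to the Carleson setting. First I identify the data: the Hilbert transform $H$ is a Calder\'on-Zygmund operator (its kernel $K(x,y)=1/(\pi(x-y))$ satisfies the pointwise size bound with $c_0=1/\pi$ and the smoothness condition with $\tau=1$, and $H$ is bounded on $L^2(\R)$ by Plancherel), and by construction $\cC=H^\Psi$ with $\Psi=\{\psi_\alpha(x)=\alpha x:\alpha\in\R\}$ is the maximally modulated singular integral of $H$ with respect to the family of linear phases.

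Next I verify the weak-type hypothesis of Corollary~\ref{co:boundedness-T-phi-Lp}. By Lemma~\ref{le:boundedness-C-Lr}, $\cC$ is bounded on $L^r(\R)$ for every $r\in(1,\infty)$; since strong $(r,r)$ boundedness trivially implies weak type $(r,r)$ via Chebyshev's inequality, the family $T^\Phi=\cC$ is of weak type $(r,r)$ for every $r\in(1,\infty)$, as required.

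Finally, given the assumption $p\in\cB_M(\R)$, I invoke Corollary~\ref{co:boundedness-T-phi-Lp} with $T=H$ and $\Phi=\Psi$ to conclude that $\cC$ extends to a bounded operator on $L^{p(\cdot)}(\R)$. There is no real obstacle here; the work has already been done, since Corollary~\ref{co:boundedness-T-phi-Lp} packages the combination of Theorem~\ref{th:main} (yielding boundedness on separable Banach function spaces where $M$ is bounded on both $X$ and $X'$) with Theorem~\ref{th:Diening} (which upgrades $p\in\cB_M(\R)$ to $p'\in\cB_M(\R)$, i.e., to boundedness of $M$ on the associate space), together with the separability and reflexivity of $L^{p(\cdot)}(\R)$ under condition \eqref{eq:exponents} recorded in Subsection~\ref{subsec:VLE}.
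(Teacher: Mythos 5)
Your proposal is correct and follows essentially the same route as the paper, which obtains this corollary by combining Corollary~\ref{co:boundedness-T-phi-Lp} with Lemma~\ref{le:boundedness-C-Lr} (the paper also notes, in Section~\ref{sec:preliminaries}, that strong boundedness on $L^r$ implies weak type $(r,r)$, so that step is as you describe). Your additional verification that $H$ is a Calder\'on-Zygmund operator is a harmless elaboration of what the paper takes as given.
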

\subsection{Boundedness of pseudodifferential operators with $L^\infty(\R,V(\R))$ symbols on variable Lebesgue spaces}
Combining Lemma~\ref{le:boundedness-S*-X} with Theorem~\ref{th:Diening} we arrive at the following.
\begin{corollary}\label{co:boundedness-S*-Lp}
Suppose $p\in\cB_M(\R)$. Then the operator $S_*$, defined for the functions
$f\in C_0^\infty(\R)$ by \eqref{eq:S*-definition}, extends to a bounded operator on the
variable Lebesgue space $L^{p(\cdot)}(\R)$.
\end{corollary}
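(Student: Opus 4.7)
The plan is to reduce Corollary~\ref{co:boundedness-S*-Lp} directly to Lemma~\ref{le:boundedness-S*-X} by verifying, for $X(\R) = L^{p(\cdot)}(\R)$, the three structural hypotheses of that lemma: separability of $X(\R)$, boundedness of $M$ on $X(\R)$, and boundedness of $M$ on the associate space $X'(\R)$.

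First I would unpack the standing assumption $p\in\cB_M(\R)$. By the definition given in Section~\ref{subsec:M-VLE}, this membership automatically enforces \eqref{eq:exponents}, i.e., $1<p_-\le p_+<\infty$. Under \eqref{eq:exponents} the general theory recalled in Section~\ref{subsec:VLE} tells us that $L^{p(\cdot)}(\R)$ is a separable Banach function space and that its associate space is (isomorphic to) $L^{p'(\cdot)}(\R)$ with $1/p(x)+1/p'(x)=1$. Boundedness of $M$ on $L^{p(\cdot)}(\R)$ is, again, built into the definition of $\cB_M(\R)$.

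Next I would invoke Diening's theorem (Theorem~\ref{th:Diening}), which asserts that $p\in\cB_M(\R)$ if and only if $p'\in\cB_M(\R)$. Consequently $M$ is also bounded on $L^{p'(\cdot)}(\R)$, i.e., on the associate space of $L^{p(\cdot)}(\R)$. At this point all hypotheses of Lemma~\ref{le:boundedness-S*-X} are verified for the separable Banach function space $X(\R)=L^{p(\cdot)}(\R)$, and applying the lemma delivers the required bounded extension of $S_*$ to $L^{p(\cdot)}(\R)$.

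Since the argument is a short composition of facts already in the paper — the functional-analytic core is contained in Lemma~\ref{le:boundedness-S*-X}, while Theorem~\ref{th:Diening} supplies the self-duality of $\cB_M(\R)$ with respect to the boundedness of $M$ — I do not foresee any genuine obstacle. The only detail worth double-checking is that the identification of the associate space of $L^{p(\cdot)}(\R)$ with $L^{p'(\cdot)}(\R)$, together with separability, really is available under the assumption $1<p_-\le p_+<\infty$ alone; this, however, is standard and is stated explicitly in Section~\ref{subsec:VLE}.
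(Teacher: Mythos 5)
Your proposal is correct and follows exactly the paper's route: the paper also obtains this corollary by combining Lemma~\ref{le:boundedness-S*-X} with Diening's Theorem~\ref{th:Diening}, using the standing facts from Section~\ref{subsec:VLE} that under \eqref{eq:exponents} the space $L^{p(\cdot)}(\R)$ is a separable Banach function space whose associate space is $L^{p'(\cdot)}(\R)$. Your extra care in checking these structural hypotheses explicitly is fine but not a departure from the paper's argument.
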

From Theorems~\ref{th:boundedness-PDO-X} and \ref{th:Diening}, taking into account
Corollary~\ref{co:boundedness-S*-Lp}, we get the following.
\begin{corollary}\label{co:boundedness-PDO-Lp}
If $p\in\cB_M(\R)$ and $a\in L^\infty(\R,V(\R))$, then the pseudodifferential
operator $a(x,D)$, defined for the functions $f\in C_0^\infty(\R)$ by the iterated integral
\eqref{eq:PDO}, extends to a bounded linear operator on the space $L^{p(\cdot)}(\R)$ and
\[
\|a(x,D)\|_{\cB(L^{p(\cdot)}(\R))} \le 2\|S_*\|_{\cB(L^{p(\cdot)}(\R))}\|a\|_{L^\infty(\R,V(\R))}.
\]
\end{corollary}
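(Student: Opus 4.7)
The plan is to realize this corollary as a direct specialization of Theorem~\ref{th:boundedness-PDO-X} to the variable Lebesgue setting, using Theorem~\ref{th:Diening} to reduce the two hypotheses on the maximal operator to the single hypothesis $p \in \cB_M(\R)$.

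First, I would verify that the framework of Theorem~\ref{th:boundedness-PDO-X} applies. Since the definition of $\cB_M(\R)$ in Section~\ref{subsec:M-VLE} builds in \eqref{eq:exponents}, i.e. $1 < p_- \le p_+ < \infty$, the space $L^{p(\cdot)}(\R)$ is a separable Banach function space whose associate space is (isomorphic to) $L^{p'(\cdot)}(\R)$, as recorded in Section~\ref{subsec:VLE}. Hence $L^{p(\cdot)}(\R)$ is a legitimate choice of $X(\R)$ in Theorem~\ref{th:boundedness-PDO-X}.

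Next, I would check the two boundedness assumptions on $M$. The hypothesis $p \in \cB_M(\R)$ gives, by definition, that $M$ is bounded on $L^{p(\cdot)}(\R)$. Invoking Theorem~\ref{th:Diening}, the same membership yields $p' \in \cB_M(\R)$, so $M$ is also bounded on $L^{p'(\cdot)}(\R)$, which is precisely the associate space of $L^{p(\cdot)}(\R)$. Thus the standing hypotheses of Theorem~\ref{th:boundedness-PDO-X} are satisfied with $X(\R) = L^{p(\cdot)}(\R)$.

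Finally, applying Theorem~\ref{th:boundedness-PDO-X} to the symbol $a \in L^\infty(\R, V(\R))$, one concludes that $a(x,D)$, initially defined on $C_0^\infty(\R)$ by \eqref{eq:PDO}, extends to a bounded linear operator on $L^{p(\cdot)}(\R)$ with
\[
\|a(x,D)\|_{\cB(L^{p(\cdot)}(\R))} \le 2 \|S_*\|_{\cB(L^{p(\cdot)}(\R))} \|a\|_{L^\infty(\R,V(\R))},
\]
which is the stated estimate. There is no substantive obstacle here: all the real work has already been done in Theorem~\ref{th:boundedness-PDO-X} (which in turn rests on Lemma~\ref{le:PDO-pointwise} and Lemma~\ref{le:boundedness-S*-X}), and Theorem~\ref{th:Diening} supplies exactly the self-duality needed to collapse the pair of hypotheses on $M$ into the single condition $p \in \cB_M(\R)$.
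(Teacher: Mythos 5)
Your proposal is correct and follows exactly the paper's own route: the paper derives this corollary by combining Theorem~\ref{th:boundedness-PDO-X} with Theorem~\ref{th:Diening} (taking into account Corollary~\ref{co:boundedness-S*-Lp}, whose content your argument subsumes via the norm bound in Theorem~\ref{th:boundedness-PDO-X}). Your verification that $p\in\cB_M(\R)$ makes $L^{p(\cdot)}(\R)$ a separable Banach function space with $M$ bounded on it and on its associate space $L^{p'(\cdot)}(\R)$ is precisely the specialization the paper intends.
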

\section{Compactness of pseudodifferential operators with non-regular symbols on variable Lebesgue spaces}
\subsection{Compactness of pseudodifferential operators with $L^\infty(\R,V(\R))$ symbols on standard Lebesgue spaces}
We start with the case of constant exponents.
\begin{theorem}[{\cite[Theorem~4.1]{K-Yu07}}]
\label{th:compactness-PDO-Lr}
Let $1<r<\infty$. If $a\in L^\infty(\R,V(\R))$ and
\begin{enumerate}
\item[{\rm(a)}] $a(x,\pm\infty)=0$ for almost all $x\in\R;$
\item[{\rm(b)}] $\lim\limits_{|x|\to\infty}V(a(x,\cdot))=0;$
\item[{\rm(c)}] for every $N>0$,
\[
\lim_{L\to+\infty}\operatornamewithlimits{ess\,sup}_{|x|\le N} \left(V_{-\infty}^{-L}(a(x,\cdot))+V_L^{+\infty}(a(x,\cdot))\right)=0;
\]
\end{enumerate}
then the pseudodifferential operator $a(x,D)$ is compact on the standard Lebesgue space $L^r(\R)$.
\end{theorem}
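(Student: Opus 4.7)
The theorem is quoted verbatim from \cite[Theorem~4.1]{K-Yu07}, so the plan below reconstructs Karlovich's strategy from the ingredients already developed in the paper. The starting point is the norm estimate of Theorem~\ref{th:boundedness-PDO-X} specialized to $X(\R)=L^r(\R)$, namely
\[
\|a(x,D)\|_{\cB(L^r(\R))}\le 2\|S_*\|_{\cB(L^r(\R))}\,\|a\|_{L^\infty(\R,V(\R))}.
\]
This reduces the task to three steps: (i)~identify a subclass $\cA_0\subset L^\infty(\R,V(\R))$ of ``nice'' symbols for which $a(x,D)$ is compact on $L^r(\R)$; (ii)~show that any $a$ satisfying (a), (b), (c) can be approximated in $L^\infty(\R,V(\R))$-norm by elements of $\cA_0$; (iii)~pass to the limit, using that compact operators form a closed two-sided ideal in $\cB(L^r(\R))$.

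For step~(i), I would take $\cA_0$ to consist of finite sums $a(x,\lambda)=\sum_{j=1}^N c_j(x)\phi_j(\lambda)$ with $c_j,\phi_j\in C_c^\infty(\R)$. For such a symbol, $a(x,D)f=\sum_{j} M_{c_j}\,\phi_j(D)f$, where $M_{c_j}$ is multiplication by $c_j$ and $\phi_j(D)=\cF^{-1}M_{\phi_j}\cF$ is a smooth compactly supported Fourier multiplier. The operator $M_{c_j}\phi_j(D)$ has kernel $c_j(x)\check\phi_j(x-y)$ that is continuous, compactly supported in $x$, and rapidly decaying in $y$; by the Fr\'echet-Kolmogorov compactness criterion (or by realizing it as a norm limit of finite-rank operators), it is compact on $L^r(\R)$. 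A finite sum of compact operators is compact.

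For step~(ii), the three hypotheses play complementary roles. Hypotheses (a) and (b) force $\|a(x,\cdot)\|_V\to 0$ as $|x|\to\infty$, so $a$ can be replaced by its cut-off to $|x|\le N$ with arbitrarily small error in $L^\infty(\R,V(\R))$. On the remaining strip $|x|\le N$, hypotheses (a) and (c) permit a further truncation in the $\lambda$-variable to $|\lambda|\le L$ with error uniformly small in $x$, again measured in the $V$-norm. Finally, the truncated symbol is approximated in the $L^\infty(\R,V(\R))$-norm by an element of $\cA_0$ through a simultaneous mollification in $x$ and $\lambda$: the $\lambda$-truncation (a)+(c) eliminates jumps at $\pm\infty$ so that convolution with a smooth mollifier converges in $V(\R)$-norm for a.e.\ $x$, and the $x$-truncation gives uniform control.

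The main obstacle is the $V$-norm approximation in step~(ii): because $V(\R)$ is non-separable, a generic $V$-function is not approximable in $V$-norm by smooth functions, and smoothness has to be bought using the vanishing of jumps and tails supplied by (a) and (c). Once this approximation is established, combining the three truncation/mollification stages produces a sequence $a_n\in\cA_0$ with $\|a-a_n\|_{L^\infty(\R,V(\R))}\to 0$; by the displayed norm estimate $\|a(x,D)-a_n(x,D)\|_{\cB(L^r(\R))}\to 0$, and so $a(x,D)$ is the operator-norm limit of the compact operators $a_n(x,D)$ and is therefore compact on $L^r(\R)$.
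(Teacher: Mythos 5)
A preliminary remark: the paper contains no proof of this statement at all --- it is imported verbatim from \cite[Theorem~4.1]{K-Yu07} and used as a black box, its only role being as input to Lemma~\ref{le:compactnesss-transfer} and Corollary~\ref{co:compactness-PDO}. So your reconstruction can only be judged on its own merits, and on those merits it has a genuine gap. The outer architecture is fine: the norm estimate $\|a(x,D)\|_{\cB(L^r(\R))}\le 2\|S_*\|_{\cB(L^r(\R))}\|a\|_{L^\infty(\R,V(\R))}$, the closedness of $\cK(L^r(\R))$ under operator-norm limits, the $x$-truncation (whose symbol-norm error is controlled by (a)+(b)) and the $\lambda$-truncation (controlled by (a)+(c)) are all correct. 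What fails is the final step of (ii): the claim that, after these truncations, the symbol can be mollified, i.e.\ approximated in the $L^\infty(\R,V(\R))$-norm by symbols smooth (or even continuous) in $\lambda$. Hypotheses (a) and (c) control jumps and variation only at $\lambda=\pm\infty$; they impose nothing at finite $\lambda$, where the class is expressly designed to admit jumps (the paper stresses in its introduction that symbols in $L^\infty(\R,V(\R))$ may have jump discontinuities in both variables). Concretely, let $a(x,\lambda)=c(x)\chi_{(0,1]}(\lambda)$ with $c\in C_0^\infty(\R)$, $c\not\equiv 0$; this symbol satisfies (a), (b), (c). Yet for any symbol $b$ with $b(x,\cdot)$ continuous, the difference $a(x,\cdot)-b(x,\cdot)$ still has jumps of height $|c(x)|$ at $\lambda=0$ and $\lambda=1$, whence $V\big(a(x,\cdot)-b(x,\cdot)\big)\ge 2|c(x)|$ and $\|a-b\|_{L^\infty(\R,V(\R))}\ge 2\|c\|_{L^\infty(\R)}>0$: no approximation by elements of your class $\cA_0$ is possible, however the mollification is arranged. (Jumps are not the only obstruction: if $g$ is a compactly supported Cantor-type singular continuous function, then $d_\lambda g$ is mutually singular with respect to both Lebesgue measure and all atomic measures, so $c(x)g(\lambda)$ cannot be approximated in $V$-norm by smooth nor even by piecewise constant symbols.) Mollification converges in variation norm only for absolutely continuous functions, and conditions (a), (c) do nothing to remove this obstruction.

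The step that must replace mollification is a direct compactness proof for truncated symbols with no smoothness whatsoever, and the tools for it are already in the paper. If $a\in L^\infty(\R,V(\R))$ vanishes for $|x|>N$ and for $|\lambda|>L$, then for $f\in C_0^\infty(\R)$,
\[
(a(x,D)f)(x)=\int_\R k(x,\lambda)\,(\cF f)(\lambda)\,d\lambda,
\qquad
k(x,\lambda):=\frac{1}{2\pi}\,a(x,\lambda)\,e^{ix\lambda}\,\chi_{[-N,N]}(x)\,\chi_{[-L,L]}(\lambda),
\]
and $k\in L^2(\R\times\R)$ because $|k|\le\frac{1}{2\pi}\|a\|_{L^\infty(\R,V(\R))}$ on a set of finite measure. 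Thus $a(x,D)$ factors as a Hilbert--Schmidt operator composed with $\cF$, hence is compact on $L^2(\R)$; since it is also bounded on every $L^p(\R)$, $1<p<\infty$, by the $S_*$-estimate, the Krasnosel'skii interpolation theorem --- Theorem~\ref{th:interpolation}(b) with constant exponents, exactly as invoked in the proof of Lemma~\ref{le:compactnesss-transfer} --- upgrades this to compactness on $L^r(\R)$ for every $r\in(1,\infty)$. Your truncation arguments and the ideal property of $\cK(L^r(\R))$ then finish the proof as you describe. In short: steps (i), (iii) and the two truncations are sound, but the smooth-approximation step is not a technical difficulty to be finessed --- it is false --- and it must be replaced by an argument of this kind.
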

\subsection{Transferring the compactness property from standard Lebesgue spaces to variable Lebesgue spaces}
\label{sec:transfer-compactness}
For a Banach space $E$, let $\cL(E)$ and $\cK(E)$ denote the Banach
algebra of all bounded linear operators and its ideal of all compact
operators on $E$, respectively.
\begin{theorem}\label{th:interpolation}
Let $p_j:\R^n\to[1,\infty]$, $j=0,1$, be a.e. finite measurable
functions, and let $p_\theta:\R^n\to[1,\infty]$ be defined for
$\theta\in[0,1]$ by
\[
\frac{1}{p_\theta(x)}=\frac{\theta}{p_0(x)}+\frac{1-\theta}{p_1(x)},\quad
x\in\R^n.
\]
Suppose $A$ is a linear operator defined on $L^{p_0(\cdot)}(\R^n)+L^{p_1(\cdot)}(\R^n)$.
\begin{enumerate}
\item[{\rm(a)}] If $A\in\cL(L^{p_j(\cdot)}(\R^n))$ for $j=0,1$, then
    $A\in\cL(L^{p_\theta(\cdot)}(\R^n))$ for all $\theta\in[0,1]$ and
\[
\|A\|_{\cL(L^{p_\theta(\cdot)}(\R^n))} \le 4
\|A\|_{\cL(L^{p_0(\cdot)}(\R^n))}^\theta
\|A\|_{\cL(L^{p_1(\cdot)}(\R^n))}^{1-\theta}.
\]

\item[{\rm(b)}]   If
    $A\in\cK(L^{p_0(\cdot)}(\R^n))$ and
    $A\in\cL(L^{p_1(\cdot)}(\R^n))$, then
    $A\in\cK(L^{p_\theta(\cdot)}(\R^n))$ for all $\theta\in(0,1)$.
\end{enumerate}
\end{theorem}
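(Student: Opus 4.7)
The plan splits naturally along the two parts. For (a) I would invoke the complex interpolation theory for variable Lebesgue spaces, and for (b) I would combine (a) with a finite-rank approximation in the spirit of Krasnosel'skii.

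For part (a), the key input is the Calder\'on complex interpolation identity
\[
\bigl[L^{p_0(\cdot)}(\R^n),\,L^{p_1(\cdot)}(\R^n)\bigr]_\theta = L^{p_\theta(\cdot)}(\R^n),\qquad \theta\in[0,1],
\]
with the Luxemburg norm on $L^{p_\theta(\cdot)}(\R^n)$ equivalent to the interpolation norm up to a universal multiplicative constant (the source of the factor~$4$). This identity is available for variable Lebesgue spaces in \cite[Chapter~7]{DHHR11}. Given it, the claimed estimate is the standard Riesz--Thorin-type bound for bounded linear operators on complex interpolation couples.

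For part (b), my plan is to reduce to (a) by a finite-rank approximation. By part (a), $A\in\cL(L^{p_\theta(\cdot)}(\R^n))$ for every $\theta\in[0,1]$. Since $A$ is compact on the separable space $L^{p_0(\cdot)}(\R^n)$, I would then choose finite-rank operators $A_n$ with $\|A-A_n\|_{\cL(L^{p_0(\cdot)})}\to 0$, constructed so that each $A_n$ is additionally a bounded operator on $L^{p_1(\cdot)}(\R^n)$ with $\sup_n\|A_n\|_{\cL(L^{p_1(\cdot)})}<\infty$; in particular $\{A-A_n\}$ stays uniformly bounded on $L^{p_1(\cdot)}(\R^n)$. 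Applying part (a) to $A-A_n$ yields
\[
\|A-A_n\|_{\cL(L^{p_\theta(\cdot)})} \le 4\,\|A-A_n\|_{\cL(L^{p_0(\cdot)})}^{\theta}\,\|A-A_n\|_{\cL(L^{p_1(\cdot)})}^{1-\theta},
\]
so the right-hand side tends to zero for each $\theta\in(0,1)$. Each $A_n$ being compact, $A$ is then a norm-limit of compact operators on $L^{p_\theta(\cdot)}(\R^n)$ and hence compact itself.

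The main obstacle lies in the construction of the approximants $A_n$: the compactness of $A$ on $L^{p_0(\cdot)}$ alone gives no direct control in $\cL(L^{p_1(\cdot)})$, so the $A_n$ must be chosen with some care. One concrete route is to define them via conditional expectations onto finite-dimensional subspaces of simple functions supported on fixed balls (such projections are uniformly bounded on every variable Lebesgue space satisfying~\eqref{eq:exponents}) followed by a truncation. An elegant alternative that circumvents the approximation entirely is to invoke an interpolation-of-compactness theorem for complex Calder\'on couples, such as Cwikel's theorem, which delivers~(b) directly from compactness at one endpoint and boundedness at the other.
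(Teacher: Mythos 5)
Your part (a) is correct and coincides with the paper's treatment: both rest on the identification of $[L^{p_0(\cdot)}(\R^n),L^{p_1(\cdot)}(\R^n)]_{1-\theta}$ with $L^{p_\theta(\cdot)}(\R^n)$ from \cite[Chapter~7]{DHHR11}. The gap is in part (b). Your primary argument needs finite-rank operators $A_n$ with $\|A-A_n\|_{\cL(L^{p_0(\cdot)}(\R^n))}\to 0$ and $\sup_n\|A_n\|_{\cL(L^{p_1(\cdot)}(\R^n))}<\infty$, and your proposed construction via conditional expectations onto finite partitions into cubes does not supply them, because the claim that such projections are uniformly bounded on every variable Lebesgue space satisfying \eqref{eq:exponents} is false. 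The norm of the averaging operator $f\mapsto\chi_Q\frac{1}{|Q|}\int_Q f(y)\,dy$ on $L^{p(\cdot)}(\R)$ is comparable to $\|\chi_Q\|_{p(\cdot)}\|\chi_Q\|_{p'(\cdot)}/|Q|$; taking $p(x)=p_1$ for $x<0$ and $p(x)=p_2>p_1$ for $x>0$ (a perfectly admissible exponent) and $Q=(-\eps,\eps)$, this quantity is of order $\eps^{1/p_2-1/p_1}$, which tends to infinity as $\eps\to 0$. Uniform boundedness of averaging operators over all cubes is essentially Diening's condition on the exponent and is strictly stronger than \eqref{eq:exponents}; since approximating a general compact $A$ forces you to refine the partition, this uniformity is indispensable and unavailable. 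This is precisely where the classical Krasnosel'skii argument breaks down for variable exponents: for constant $p$, conditional expectations are contractions on $L^p$ by Jensen's inequality, and that feature disappears the moment $p(\cdot)$ varies. Note also that the theorem as stated assumes no regularity whatsoever on $p_0,p_1$ --- not even \eqref{eq:exponents} --- so no argument requiring exponent regularity can reach the stated generality.

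Your fallback --- invoking an off-the-shelf one-sided compactness interpolation theorem --- is indeed the right strategy, and it is the paper's, but the attribution matters. Cwikel's theorem on one-sided interpolation of compactness concerns the \emph{real} method, which does not help here because the real interpolation space $(L^{p_0(\cdot)},L^{p_1(\cdot)})_{\theta,q}$ is not $L^{p_\theta(\cdot)}$; for the \emph{complex} method, one-sided interpolation of compactness for general Banach couples is a well-known open problem. What saves the day is the lattice structure: the paper cites Cobos, K\"uhn, and Schonbek \cite[Theorem~3.2]{CKS92}, which gives one-sided compactness interpolation for the complex method on couples of Banach lattices with the Fatou property, and combines it with \cite[Theorem~7.1.2]{DHHR11} and the Fatou property of the spaces $L^{p_j(\cdot)}(\R^n)$. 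With that substitution your alternative route becomes the paper's proof; your primary route cannot be repaired at the stated level of generality.
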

Part (a) is proved in \cite[Corollary~7.1.4]{DHHR11} under the
more general assumption that $p_j$ may take infinite values on
sets of positive measure (and in the setting of arbitrary measure
spaces). Part (b) follows from a general interpolation theorem by
Cobos, K\"uhn, and Schonbeck \cite[Theorem~3.2]{CKS92} for the
complex interpolation method for Banach lattices satisfying the
Fatou property. Indeed, the complex interpolation space
$[L^{p_0(\cdot)}(\R^n),L^{p_1(\cdot)}(\R^n)]_{1-\theta}$ is
isomorphic to the variable Lebesgue space
$L^{p_\theta(\cdot)}(\R^n)$ (see \cite[Theorem~7.1.2]{DHHR11}),
and $L^{p_j(\cdot)}(\R^n)$ have the Fatou property (see
\cite[p.~77]{DHHR11}).

The following characterization of the class $\cB_M(\R^n)$ was communicated to the authors
of \cite{KS13} by Diening.
\begin{theorem}[{\cite[Theorem~4.1]{KS13}}]
\label{th:Deining-interpolation}
If $p\in\cB_M(\R^n)$, then there exist constants $p_0\in(1,\infty)$, $\theta\in(0,1)$,
and a variable exponent $p_1\in\cB_M(\R^n)$ such that
\begin{equation}\label{eq:Diening-interpolation}
\frac{1}{p(x)}=\frac{\theta}{p_0}+\frac{1-\theta}{p_1(x)},
\quad x\in\R^n.
\end{equation}
\end{theorem}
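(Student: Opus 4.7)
The plan is to construct $(p_0,\theta,p_1)$ by iterating two tools already cited in the excerpt: Theorem~\ref{th:Lerner-Perez} (the self-improvement property, giving $p/r\in\cB_M(\R^n)$ for some $r>1$) and Theorem~\ref{th:Diening} (Diening's duality, $q\in\cB_M(\R^n)$ iff $q'\in\cB_M(\R^n)$). The key observation is that applying them four times in the order \emph{self-improve, dualize, self-improve, dualize} produces a variable exponent that fits the required decomposition.

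First I would apply Theorem~\ref{th:Lerner-Perez} to obtain $r\in(1,p_-)$ with $p/r\in\cB_M(\R^n)$. Next, Theorem~\ref{th:Diening} gives $(p/r)'\in\cB_M(\R^n)$. Applying Theorem~\ref{th:Lerner-Perez} once more yields $t\in(1,((p/r)')_-)$ such that $(p/r)'/t\in\cB_M(\R^n)$, where a short calculation shows $((p/r)')_-=p_+/(p_+-r)$. A final invocation of Theorem~\ref{th:Diening} then produces
\[
p_1 := \big((p/r)'/t\big)'\in\cB_M(\R^n).
\]

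The next step is routine algebra. Computing the dual twice via $q'(x)=q(x)/(q(x)-1)$ gives
\[
\frac{1}{p_1(x)} = (1-t) + \frac{tr}{p(x)},
\]
and the ranges $r<p_-$ and $t<p_+/(p_+-r)$ built into the two self-improvement steps force $1/p_1(x)\in(0,1)$ for all $x$, so $p_1$ is a legitimate variable exponent satisfying \eqref{eq:exponents}. Setting
\[
p_0:=\frac{tr-1}{t-1}\in(1,\infty),\qquad \theta:=1-\frac{1}{tr}\in(0,1),
\]
a direct computation yields
\[
\frac{\theta}{p_0}+\frac{1-\theta}{p_1(x)}=\frac{t-1}{tr}+\frac{1}{tr}\Big((1-t)+\frac{tr}{p(x)}\Big)=\frac{1}{p(x)},
\]
which is \eqref{eq:Diening-interpolation}.

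The main obstacle is spotting this precise four-step composition. Naive attempts fail: trying $p_1=p/r$ directly forces $p$ to be constant, and arguing via a ``small $L^\infty$-perturbation of $1/p$'' is doomed because $\cB_M(\R^n)$ is not known to be open under such perturbations. Once the ``self-improve--dualize--self-improve--dualize'' sequence is identified, each step is a single invocation of Theorem~\ref{th:Lerner-Perez} or Theorem~\ref{th:Diening}, and the remaining work is merely to choose $r$ and $t$ within their admissible ranges so that all intermediate exponents are well defined.
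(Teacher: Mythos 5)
Your proposal is correct, and it is worth noting at the outset that the paper itself contains no proof of this statement: Theorem~\ref{th:Deining-interpolation} is imported verbatim from \cite[Theorem~4.1]{KS13}, where the argument is credited to Diening, and that argument is precisely the ``self-improve, dualize, self-improve, dualize'' composition you found. Your algebra checks out: with $q:=(p/r)'$ one has $1/q=1-r/p$, hence $1/p_1 = 1 - t/q = (1-t)+tr/p(x)$, and the choices $\theta=1-1/(tr)\in(0,1)$ and $p_0=(tr-1)/(t-1)\in(1,\infty)$ (both legitimate because $t,r>1$) satisfy \eqref{eq:Diening-interpolation}; your observation that the shorter chain dualize--improve--dualize fails (it forces $p_0=1$) is also the right explanation of why all four steps are needed. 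The only difference from the source is cosmetic: \cite{KS13} invokes the left-openness clause of Diening's theorem \cite[Theorem~8.1]{D05} directly, whereas you derive it from Theorem~\ref{th:Lerner-Perez} applied to $X(\R^n)=L^{p(\cdot)}(\R^n)$. For that derivation you should record two one-line justifications which your write-up uses implicitly: (i) the rescaling identity $\|M_rf\|_{L^{p(\cdot)}(\R^n)}=\|M(|f|^r)\|_{L^{p(\cdot)/r}(\R^n)}^{1/r}$, which translates ``$M_r$ is bounded on $L^{p(\cdot)}(\R^n)$'' into ``$M$ is bounded on $L^{p(\cdot)/r}(\R^n)$'', i.e.\ $p/r\in\cB_M(\R^n)$; and (ii) the fact that the exponent $r$ produced by Theorem~\ref{th:Lerner-Perez} may be decreased so that $r<p_-$ (needed for $p/r$ to satisfy \eqref{eq:exponents}), which follows from the pointwise bound $M_sf\le M_rf$ for $1<s\le r$ given by H\"older's inequality, exactly as in Subsection~\ref{subsec:self-improving}; the same remark applies to shrinking $t$ below $((p/r)')_-=p_+/(p_+-r)$ before applying Theorem~\ref{th:Diening} to $(p/r)'/t$. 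These are standard and do not affect the correctness of your argument.
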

From the above two theorems we obtain the following result, which allows us to transfer
the compactness property from standard Lebesgue spaces to variable Lebesgue spaces.
\begin{lemma}\label{le:compactnesss-transfer}
Let $A\in\cL(L^{p(\cdot)}(\R^n))$ for all $p\in\cB_M(\R^n)$. If $A\in\cK(L^r(\R^n))$ for
some $r\in(1,\infty)$, then $A\in\cK(L^{p(\cdot)}(\R^n))$ for all $p\in\cB_M(\R^n)$.
\end{lemma}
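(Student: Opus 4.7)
The plan is to combine Diening's interpolation decomposition (Theorem~\ref{th:Deining-interpolation}) with the Cobos--K\"uhn--Schonbeck interpolation of compactness (Theorem~\ref{th:interpolation}(b)) in two stages: first I would upgrade the single-exponent compactness on $L^r(\R^n)$ to compactness on every constant-exponent $L^s(\R^n)$ with $s\in(1,\infty)$; then I would use Diening's splitting to interpolate between one of those constant exponents and a variable exponent in $\mathcal{B}_M(\R^n)$ to reach the target space $L^{p(\cdot)}(\R^n)$.

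For the first stage, fix an arbitrary $s\in(1,\infty)$. The hypothesis says $A$ is bounded on $L^{q(\cdot)}(\R^n)$ for every $q\in\mathcal{B}_M(\R^n)$; in particular $A\in\mathcal{L}(L^q(\R^n))$ for every constant $q\in(1,\infty)$ since constant exponents obviously belong to $\mathcal{B}_M(\R^n)$. Choose a constant exponent $q\in(1,\infty)$ on the opposite side of $r$ from $s$, so that $s$ lies strictly between $q$ and $r$. Then there exists $\theta\in(0,1)$ and a representation $1/s=\theta/r+(1-\theta)/q$. Applying Theorem~\ref{th:interpolation}(b) with $p_0\equiv r$ (on which $A$ is compact) and $p_1\equiv q$ (on which $A$ is bounded) yields $A\in\mathcal{K}(L^s(\R^n))$. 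If $s=r$ there is nothing to prove. Hence $A\in\mathcal{K}(L^s(\R^n))$ for every $s\in(1,\infty)$.

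For the second stage, let $p\in\mathcal{B}_M(\R^n)$ be arbitrary. Theorem~\ref{th:Deining-interpolation} provides a constant exponent $p_0\in(1,\infty)$, a number $\theta\in(0,1)$, and a variable exponent $p_1\in\mathcal{B}_M(\R^n)$ such that
\[
\frac{1}{p(x)}=\frac{\theta}{p_0}+\frac{1-\theta}{p_1(x)},\quad x\in\R^n.
\]
By the first stage $A\in\mathcal{K}(L^{p_0}(\R^n))$, and by hypothesis $A\in\mathcal{L}(L^{p_1(\cdot)}(\R^n))$. Theorem~\ref{th:interpolation}(b) then gives $A\in\mathcal{K}(L^{p(\cdot)}(\R^n))$, which is exactly the desired conclusion.

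The main obstacle is conceptual rather than technical: one must realize that the single-exponent hypothesis $A\in\mathcal{K}(L^r(\R^n))$ is enough precisely because Diening's theorem lets us pick the constant endpoint $p_0$ freely within $(1,\infty)$, and Krasnosel'ski\u{\i}-type interpolation of compactness among constant exponents is already built into Theorem~\ref{th:interpolation}(b). Once both ingredients are in hand, the argument is a direct two-step interpolation with no calculation.
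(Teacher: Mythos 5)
Your proposal is correct and follows essentially the same route as the paper: first upgrade $A\in\cK(L^r)$ to $A\in\cK(L^s)$ for all $s\in(1,\infty)$ via Krasnosel'skii-type interpolation among constant exponents (the paper invokes this in one line; you spell out the choice of the auxiliary exponent $q$), and then combine Diening's decomposition $1/p(x)=\theta/p_0+(1-\theta)/p_1(x)$ with Theorem~\ref{th:interpolation}(b) to conclude compactness on $L^{p(\cdot)}(\R^n)$. The only difference worth noting is that you cite Theorem~\ref{th:Deining-interpolation} for the decomposition, which is the correct reference, whereas the paper's proof mistakenly points to Theorem~\ref{th:Diening}.
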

\begin{proof}
By the hypothesis, the operator $A$ is bounded on all standard Lebesgue spaces $L^r(\R^n)$
with $1<r<\infty$. From the classical Krasnosel'skii interpolation theorem
(Theorem~\ref{th:interpolation}(b) with constant exponents) it follows that $A\in\cK(L^r(\R^n))$
for all $1<r<\infty$. If $p\in\cB_M(\R^n)$, then in view of Theorem~\ref{th:Diening}
there exist $p_0\in(1,\infty)$, $\theta\in(0,1)$, and a variable exponent $p_1\in\cB_M(\R^n)$
such that \eqref{eq:Diening-interpolation} holds. Since $A\in\cL(L^{p_1(\cdot)}(\R^n))$ and
$A\in\cK(L^{p_0}(\R^n))$, from Theorem~\ref{th:interpolation}(b) we obtain $A\in\cK(L^{p(\cdot)}(\R^n))$.
\end{proof}
\subsection{Compactness of pseudodifferential operators with $L^\infty(\R,V(\R))$ symbols on variable Lebesgue spaces}
Combining Corollary~\ref{co:boundedness-PDO-Lp} and Theorem~\ref{th:compactness-PDO-Lr} with
Lemma~\ref{le:compactnesss-transfer}, we arrive at our last result.
\begin{corollary}\label{co:compactness-PDO}
Suppose $p\in\cB_M(\R)$. If $a\in L^\infty(\R,V(\R))$ satisfies the hypotheses {\rm (a)--(c)}
of Theorem~\ref{th:compactness-PDO-Lr}, then the pseudodifferential operator $a(x,D)$ is
compact on the variable Lebesgue space $L^{p(\cdot)}(\R)$.
\end{corollary}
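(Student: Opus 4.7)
The statement is a direct assembly of three already-established results, so my plan is to check that each hypothesis of the transfer lemma is verified under the stated assumptions, rather than to do any new work.

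First I would verify the boundedness input to Lemma~\ref{le:compactnesss-transfer}. Since $a\in L^\infty(\R,V(\R))$, Corollary~\ref{co:boundedness-PDO-Lp} applies for every variable exponent $q\in\cB_M(\R)$ and yields $a(x,D)\in\cL(L^{q(\cdot)}(\R))$, with the quantitative bound $\|a(x,D)\|_{\cB(L^{q(\cdot)}(\R))}\le 2\|S_*\|_{\cB(L^{q(\cdot)}(\R))}\|a\|_{L^\infty(\R,V(\R))}$. In particular, the extended operator is well defined on the sum $L^{p_0}(\R)+L^{p_1(\cdot)}(\R)$ appearing in the interpolation setup of Lemma~\ref{le:compactnesss-transfer}.

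Next I would invoke the compactness input. Since $a$ satisfies hypotheses (a)--(c) of Theorem~\ref{th:compactness-PDO-Lr}, picking any fixed $r\in(1,\infty)$ (for instance $r=2$) gives $a(x,D)\in\cK(L^r(\R))$. This is exactly the hypothesis of Lemma~\ref{le:compactnesss-transfer} that demands compactness on some standard Lebesgue space.

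Finally I would apply Lemma~\ref{le:compactnesss-transfer} directly to the linear operator $A=a(x,D)$: the two hypotheses just verified (bounded on $L^{q(\cdot)}(\R)$ for every $q\in\cB_M(\R^n)$, compact on some $L^r(\R)$) yield $a(x,D)\in\cK(L^{p(\cdot)}(\R))$ for every $p\in\cB_M(\R)$, in particular for the given $p$. There is essentially no obstacle to overcome here; the only thing to be a little careful about is that the operator really is (uniquely) defined on the sum space so that the same $A$ can be interpolated — but this is automatic because for each $q\in\cB_M(\R)$ its restriction to $C_0^\infty(\R)$ agrees with the iterated integral \eqref{eq:PDO} and $C_0^\infty(\R)$ is dense in every $L^{q(\cdot)}(\R)$ under consideration (Lemma~\ref{le:density}), so the extensions are consistent across the interpolation couple used inside Lemma~\ref{le:compactnesss-transfer}.
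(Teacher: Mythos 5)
Your proposal is correct and follows exactly the paper's own argument: the paper proves this corollary by combining Corollary~\ref{co:boundedness-PDO-Lp} (boundedness on $L^{q(\cdot)}(\R)$ for all $q\in\cB_M(\R)$), Theorem~\ref{th:compactness-PDO-Lr} (compactness on a standard $L^r(\R)$), and the transfer Lemma~\ref{le:compactnesss-transfer}, precisely as you do. Your added remark on the consistency of the extensions across the interpolation couple, via agreement on the dense subspace $C_0^\infty(\R)$, is a sensible technical point that the paper leaves implicit.
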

\bibliographystyle{amsalpha}

\end{document}